\documentclass[11pt]{amsart}
\usepackage{amsmath}
\usepackage{amssymb}
\usepackage{graphicx}
\usepackage{url}
\usepackage{multirow}

\usepackage{algorithm}
\usepackage{algorithmic}
\usepackage{float}
\usepackage{comment}
\usepackage[font=small,labelfont=bf]{caption}
\usepackage{subcaption}

\usepackage{color}
\usepackage{hyperref}
\usepackage{multicol}

\newtheorem{theorem}{Theorem}[section]

\newtheorem{lemma}[theorem]{Lemma}

\theoremstyle{definition}

\newtheorem{example}[theorem]{Example}
\newtheorem{remark}[theorem]{Remark}

\numberwithin{equation}{section}

\title[Lipschitz-Free Mirror Descent with Absolute and Relative Inexactness] {Lipschitz-Free Mirror Descent Methods for Relatively Strongly Convex Functions with/without Absolute and Relative Inexactness}

\author[M.~S.~Alkousa]{Mohammad S. Alkousa}
\address[M.~S.~Alkousa]{Innopolis University, Russia.}
\email{\tt m.alkousa@innopolis.ru}

\author[F.~S.~Stonyakin]{Fedor~S.~Stonyakin}
\address[F.~S.~Stonyakin]{Moscow Institute of Physics and Technology and V. I. Vernadsky Crimean Federal University, Russia.}
\email{{\tt fedyor@mail.ru}}

\keywords{Convex optimization, Non-smooth problem, Mirror descent, Lipschitz-free, Relative error, Absolute error.}

\begin{document}

\begin{abstract}
In this paper, we analyze the mirror descent algorithm for non-smooth optimization problems in which the objective function is relatively strongly convex, without relying on the standard Lipschitz continuity assumption commonly used in the literature. We provide convergence analyses for both exact and inexact subgradient information. Furthermore, through numerical experiments, we compare the derived bounds on the quality of the approximate solutions with existing estimates in the literature and demonstrate the effectiveness of the proposed results. 
\end{abstract}

\maketitle

\section{Introduction}
The first method for unconstrained minimization of a non-smooth convex function was proposed with a fixed step size in \cite{shor_book}. In subsequent years, several strategies for selecting step sizes were developed; see, for example, \cite{polyak1987introduction}. These include fixed step sizes, non-summable diminishing step sizes, square-summable but not summable step sizes \cite{Boyd2004Subgradient}, the Polyak step size \cite{polyak1987introduction}, and many others. For convex and strongly convex objective functions that are Lipschitz continuous, the optimal convergence rates of the subgradient method are well established; see \cite{Bubeck_book,Nesterov_book} for the convex case and \cite{Bach2012simpler} for the strongly convex case.

A major subsequent development in this area is related to the development of the mirror descent method, originally proposed in \cite{Nemirovskii1979efficient,Nemirovsky1983Complexity} and later analyzed in \cite{Beck2003Mirror}. Mirror descent can be viewed as a non-Euclidean generalization of the standard subgradient method. It has been successfully applied in a wide range of applications; see, for instance, \cite{applications_tomography_2001,article:Nazin_2011,article:Nazin_2014,Lai2026Capacity} and the references therein. Moreover, mirror descent is applicable to optimization problems in Banach spaces, where gradient descent methods are generally not suitable \cite{article:doan_2019}. Extensions of mirror descent to constrained optimization problems have been proposed and studied in, among others, \cite{Bayandina2018Mirror,article:beck_comirror_2010,Nemirovsky1983Complexity,Stonyakin2019some,Stonyakin2019Adaptive}.

In the literature on subgradient and mirror descent methods, when the objective function is convex or strongly convex, a common assumption is Lipschitz continuity of the objective function (see \eqref{gfdsdfg8521} and \eqref{lip_2} below). However, many convex functions encountered in practice fail to satisfy this assumption, even on compact convex sets. A simple example is the function $f(x) = - \sqrt{x}$ on the interval $[0,1]$. Furthermore, in most practical problems, the exact value of the Lipschitz constant is either unknown, difficult to estimate, or excessively large, which significantly deteriorates theoretical bounds on the quality of approximate solutions produced by first-order algorithms. These observations motivate the study of convergence properties of optimization algorithms for non-smooth functions without relying on Lipschitz continuity assumptions, leading to so-called Lipschitz-free methods.

Several attempts in this direction have been made. For example, in \cite{Ablaev2022Subgradient}, convergence guarantees for the projected subgradient method applied to strongly convex, non-Lipschitz functions were established (see Theorem 8 therein). In \cite{Xia2025Lipschitz}, a novel approach was proposed that extends the convergence analysis of the projected subgradient method to the non-Lipschitz convex setting. Subsequently, the results of \cite{alkousa2024optimal} and \cite{Xia2025Lipschitz} were further extended in \cite{Bowen2025Lipschitz}, where Lipschitz-free mirror descent algorithms for convex optimization problems were introduced.

In many large-scale optimization problems, exact (sub)gradients are not available, which limits the applicability of classical first-order methods. A standard example is stochastic optimization, where only noisy stochastic approximations of gradients can be computed. Another important class of problems involves deterministic optimization with deterministic errors in the evaluation of (sub)gradients and/or objective function values. Such errors often arise when subproblems within an algorithm cannot be solved exactly due to computational complexity. A notable example is PDE-constrained optimization \cite{baraldi2023proximal}. Inexact gradients also naturally occur in optimal control and inverse problems, where the computation of gradients requires solving ODEs or PDEs \cite{matyukhin2021convex}. For a broader discussion of inexact gradient models and related algorithms, we refer to \cite{devolder2013intermediate,kornilov2023intermediate,polyak1987introduction,stonyakin2021inexact,vasin2023accelerated} and the references therein. These applications further motivate the study of first-order methods with inexact information.

In this paper, we analyze the mirror descent algorithm for the class of relatively strongly convex functions without assuming Lipschitz continuity of the objective function. Our analysis is carried out in two settings: first, when exact subgradient information is available, and second, when only inexact subgradient information is accessible. In particular, we consider both absolute and relative inexactness models (see \eqref{inexact_grad}, \eqref{A_inexact_mirror}, and \eqref{B_inexact_mirror}).

The remainder of the paper is organized as follows. Section~\ref{sect_basics} reviews basic definitions and preliminaries related to mirror descent. Section~\ref{sec:LipFreeMirrorStrongly} is devoted to the analysis of the mirror descent algorithm without the Lipschitz continuity assumption in the case of exact subgradient information. In Section~\ref{alalysis_inexact}, we extend this analysis to settings with absolute and relative inexact subgradients. Finally, Section~\ref{sect_numerical} presents numerical experiments that illustrate the effectiveness of the proposed methods and theoretical results.

\section{Fundamentals }\label{sect_basics}

Let $(\mathbf{E},\|\cdot\|)$ be a normed finite-dimensional vector space, with an arbitrary norm $\|\cdot\|$, and $\mathbf{E}^*$ be the conjugate space of $\mathbf{E}$ with the following norm
$$
    \|y\|_{*}=\max\limits_{x \in \mathbf{E}}\{\langle y,x\rangle: \|x\|\leq1\},
$$
where $\langle y,x\rangle$ is the value of the continuous linear functional $y \in \mathbf{E}^*$ at $x \in \mathbf{E}$.

Let $Q \subset \mathbf{E}$ be a compact convex set, and 
$\psi: Q \longrightarrow \mathbb{R}$ be a proper closed differentiable and $1$-strongly convex (called prox-function or distance generating function). The corresponding Bregman divergence is defined as 
$$
    V_{\psi} (x, y) = \psi (x) - \psi (y) - \langle \nabla \psi (y), x - y \rangle, \quad \forall x , y \in Q. 
$$
For the Bregmann divergence, it holds the following inequality
\begin{equation}\label{eq_breg}
    V_{\psi} (x, y) \geq \frac{1}{2} \|y - x\|^2, \quad \forall x, y \in Q. 
\end{equation}

In what follows, we denote the subdifferential of $f$ at $x$ by $\partial f(x)$, and the subgradient of $f$ at any point $x$ by $\nabla f(x) \in \partial f(x)$.   Let $\operatorname{dom} (f)$ denote the domain of the function $f$, and $\operatorname{dom} (\partial f)$ denote the set of subdifferentiability points of $f$, i.e., 
$$
    \operatorname{dom} (\partial f) = \left\{ x \in  \mathbf{E} : \partial f(x) \ne \emptyset  \right\}. 
$$

The following identity, known as the three points identity, is essential in the analysis of the mirror descent method.

\begin{lemma}\label{three_points_lemma}(Three points identity) \cite{Chen1993}
Suppose that $\psi: \mathbf{E} \longrightarrow (- \infty, \infty]$ is a proper closed, convex, and differentiable function over $\operatorname{dom}(\partial \psi)$. Let $a, b \in \operatorname{dom}(\partial(\psi))$ and $c \in \operatorname{dom} (\psi)$. Then 
\begin{equation}\label{eq_three_points}
    \langle\nabla \psi(b)-\nabla \psi(a), c-a\rangle=V_{\psi}(c,a)+V_{\psi}(a, b)-V_{\psi}(c, b).
\end{equation}
\end{lemma}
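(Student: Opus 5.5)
The plan is to verify the identity by expanding all three Bregman divergences on the right-hand side directly from the definition $V_{\psi}(x,y)=\psi(x)-\psi(y)-\langle\nabla\psi(y),x-y\rangle$. Every term is well defined under the hypotheses. The gradients that appear are only $\nabla\psi(a)$ and $\nabla\psi(b)$, and $a,b\in\operatorname{dom}(\partial\psi)$, where $\psi$ is differentiable by assumption. The function values that appear are $\psi(a),\psi(b),\psi(c)$, and these are finite since $a,b,c\in\operatorname{dom}(\psi)$. Note that $\operatorname{dom}(\partial\psi)\subseteq\operatorname{dom}(\psi)$ because $\psi$ is proper.

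Concretely, I will write
\begin{align*}
V_{\psi}(c,a) &= \psi(c)-\psi(a)-\langle\nabla\psi(a),c-a\rangle,\\
V_{\psi}(a,b) &= \psi(a)-\psi(b)-\langle\nabla\psi(b),a-b\rangle,\\
V_{\psi}(c,b) &= \psi(c)-\psi(b)-\langle\nabla\psi(b),c-b\rangle.
\end{align*}
I then form $V_{\psi}(c,a)+V_{\psi}(a,b)-V_{\psi}(c,b)$. The function values cancel completely: $\psi(c)$ appears with opposite signs in the first and third lines, $\psi(a)$ in the first and second, and $\psi(b)$ in the second and third. What remains is
$$
-\langle\nabla\psi(a),c-a\rangle-\langle\nabla\psi(b),a-b\rangle+\langle\nabla\psi(b),c-b\rangle.
$$

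The last step uses linearity in the second argument to combine the two $\nabla\psi(b)$ terms: $\langle\nabla\psi(b),(c-b)-(a-b)\rangle=\langle\nabla\psi(b),c-a\rangle$. Together with the remaining term this gives $\langle\nabla\psi(b)-\nabla\psi(a),c-a\rangle$, which is the left-hand side of \eqref{eq_three_points}.

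There is no real obstacle here. The only point needing care is the domain bookkeeping, since $c$ is only assumed to lie in $\operatorname{dom}(\psi)$. This causes no difficulty because $\nabla\psi(c)$ never appears in the expansion.
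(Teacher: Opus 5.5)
Your direct expansion is correct: the function values cancel, and the two $\nabla\psi(b)$ pairings combine to $\langle\nabla\psi(b),c-a\rangle$, which yields exactly the left-hand side; your domain bookkeeping (only $\nabla\psi(a)$ and $\nabla\psi(b)$ are needed, and $\operatorname{dom}(\partial\psi)\subseteq\operatorname{dom}(\psi)$ for proper $\psi$) is also sound. The paper gives no proof of its own and simply cites Chen and Teboulle \cite{Chen1993}, where the identity follows from this same routine computation from the definition of $V_{\psi}$.
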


\noindent
\textbf{Fenchel-Young inequality.} For any $a \in \mathbf{E}, b \in \mathbf{E}^*$, it holds the following inequality
\begin{equation}\label{Fenchel_Young_ineq}
    \left|  \langle a,b \rangle\right| \leq \frac{\|a\|^2}{2 \lambda} + \frac{\lambda\|b\|_*}{2}, \quad \forall \lambda > 0.
\end{equation}

We say that the function $f:Q \longrightarrow \mathbb{R}$ is a Lipschitz continuous function (non-smooth) with Lipschitz constant $M_f >0$, i.e., 
\begin{equation}\label{lipschitz_cond}
    |f(x) - f(y)| \leq M_f \|x - y\|, \quad \forall x, y \in Q. 
\end{equation}

The function $f: Q \longrightarrow \mathbb{R}$ (not necessarily differentiable) is called a $\mu$-strongly convex relative to $\psi(\cdot)$ (or  relatively $\mu$-strongly convex) in $Q$ if 
\begin{equation}\label{gfdsdfg8521}
    f(y) \geq f(x) + \langle \nabla f(x), y - x \rangle + \mu V_{\psi} (y, x) \quad \forall x, y \in Q.
\end{equation}
From \eqref{gfdsdfg8521}, we conclude the equivalent condition
\begin{equation}\label{lip_2}
    \|\nabla f(x)\|_* \leq M_f, \quad \text{where} \quad \nabla f(x) \in \partial f(x) \quad \forall x \in Q. 
\end{equation}

In this paper, we consider the following constrained optimization problem
\begin{equation}\label{main_constrained_prob}
    \min_{x \in Q} f(x),
\end{equation}
where $f$ is a relatively $\mu$-strongly convex (not necessarily differentiable) function. 


\section{Lipschitz-free mirror descent method for strongly convex functions}\label{sec:LipFreeMirrorStrongly}

In \cite{Bach2012simpler}, for the problem \eqref{main_constrained_prob}, when $\mathbf{E} = \mathbb{R}^n$, $f$ is $M_f$-Lipschitz and $\mu$-strongly convex, by using the subgradient method
\begin{equation}\label{subgrad_method}
    x_{k + 1} = \operatorname{Proj}_{Q} \left\{x_k - \frac{2}{\mu (k+1)} \nabla f(x_k)\right\},
\end{equation}
it was proved the following inequality
\begin{equation}\label{estim_subgrad_func}
    f(\widehat{x}) - f(x_*) \leq \frac{2M_f^2}{\mu(N+1)}, \quad \widehat{x} = \sum_{k = 1}^{N} \frac{2k}{N(N+1)} x_k,
\end{equation}
and
\begin{equation}\label{estim_subgrad_var}
    \|\widehat{x} - x_*\|_2^2 \leq \frac{4M_f^2}{\mu^2 (N+1)}.
\end{equation}
In this section, for problem \eqref{main_constrained_prob}, we will use the following Algorithm \ref{alg_mirror_descent}. 

\begin{algorithm}[!ht]
\caption{Mirror descent method.}\label{alg_mirror_descent}
\begin{algorithmic}[1]
    \REQUIRE step sizes $\{\gamma_k\}_{k \geq 1}$,  initial point $x_{1}  \in Q$, number of iterations $N \geq 1$.
    \FOR{$k= 1, 2, \ldots, N$}
    \STATE Calculate $\nabla f(x_k) \in \partial f(x_k)$,
    \STATE $x_{k+1} = \arg\min_{x \in Q} \left\{ \langle \nabla f(x_k), x   \rangle + \frac{1}{\gamma_k} V_{\psi}(x,x_k) \right\}. $ \label{line3_mirror}
    \ENDFOR
\end{algorithmic}
\end{algorithm}

For Algorithm \ref{alg_mirror_descent}, we have the following result.
\begin{theorem}\label{theo2_rate_projsubgrad_strongly}
Let $Q$ be a compact convex set, and $f: Q \longrightarrow \mathbb{R}$ be a relatively $\mu$-strongly convex. Then, for problem \eqref{main_constrained_prob}, by Algorithm \ref{alg_mirror_descent}, with $\gamma_k = \frac{2}{\mu (k+1)}$, it satisfies the following inequality
\begin{equation}\label{extime2_sub_strogly}
    f(\widehat{x}) - f(x_*) \leq  \frac{2}{\mu N(N+1)} \sum_{k = 1}^{N} \frac{k \|\nabla f(x_k)\|_*^2}{k+1}, 
\end{equation}
where $$\widehat{x} = \frac{1}{\sum_{k = 1}^{N} k} \sum_{k = 1}^{N} k x_k.$$

Moreover, if $f$ is $M_f$-Lipschitz, then to achieve the accuracy of $\varepsilon$ with respect to the function, i.e., $f(\widehat{x}) - f(x_*) \leq \varepsilon$, it is necessary $\mathcal{O} \left(\frac{M_f^2}{\mu\, \varepsilon}\right)$ iterations.    
\end{theorem}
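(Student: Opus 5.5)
The plan is the standard mirror-descent one-step inequality, adapted to relative strong convexity and the weights $k$, without ever bounding $\|\nabla f(x_k)\|_*$ by a global constant.

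\textbf{Step 1 (one-step inequality).} The optimality condition of line \ref{line3_mirror} of Algorithm \ref{alg_mirror_descent} gives, for every $x\in Q$,
\[
\langle \gamma_k\nabla f(x_k)+\nabla\psi(x_{k+1})-\nabla\psi(x_k),\, x-x_{k+1}\rangle\ge 0 .
\]
Applying the three points identity (Lemma \ref{three_points_lemma}) with $a=x_{k+1}$, $b=x_k$, $c=x$ yields
\[
\gamma_k\langle \nabla f(x_k), x_{k+1}-x\rangle \le V_\psi(x,x_k)-V_\psi(x,x_{k+1})-V_\psi(x_{k+1},x_k).
\]
Next I write $\langle\nabla f(x_k),x_k-x\rangle=\langle\nabla f(x_k),x_k-x_{k+1}\rangle+\langle\nabla f(x_k),x_{k+1}-x\rangle$. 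The first term is controlled by Fenchel--Young \eqref{Fenchel_Young_ineq} with parameter $\gamma_k$, giving $\frac{\gamma_k}{2}\|\nabla f(x_k)\|_*^2+\frac{1}{2\gamma_k}\|x_k-x_{k+1}\|^2$. The squared norm is then absorbed by $V_\psi(x_{k+1},x_k)$ using \eqref{eq_breg}. This gives
\[
\langle\nabla f(x_k),x_k-x\rangle\le \frac{\gamma_k}{2}\|\nabla f(x_k)\|_*^2+\frac{1}{\gamma_k}\bigl(V_\psi(x,x_k)-V_\psi(x,x_{k+1})\bigr).
\]

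\textbf{Step 2 (relative strong convexity).} Apply \eqref{gfdsdfg8521} with $x=x_k$ and $y=x_*$, so that $f(x_k)-f(x_*)\le \langle\nabla f(x_k),x_k-x_*\rangle-\mu V_\psi(x_*,x_k)$. Substituting $\gamma_k=\frac{2}{\mu(k+1)}$ gives
\[
f(x_k)-f(x_*)\le \frac{\|\nabla f(x_k)\|_*^2}{\mu(k+1)}+\frac{\mu(k-1)}{2}V_\psi(x_*,x_k)-\frac{\mu(k+1)}{2}V_\psi(x_*,x_{k+1}).
\]

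\textbf{Step 3 (weighting and telescoping).} Multiply by $k$. The Bregman terms become $\frac{\mu}{2}\bigl[(k-1)k\,V_\psi(x_*,x_k)-k(k+1)V_\psi(x_*,x_{k+1})\bigr]$, which telescope when summed over $k=1,\dots,N$. The first term vanishes because $k-1=0$ at $k=1$, and the last term is nonpositive. Hence
\[
\sum_{k=1}^N k\bigl(f(x_k)-f(x_*)\bigr)\le \frac1\mu\sum_{k=1}^N\frac{k\|\nabla f(x_k)\|_*^2}{k+1}.
\]
Dividing by $\sum_{k=1}^N k=N(N+1)/2$ and using Jensen's inequality (convexity of $f$) for $\widehat x$ gives \eqref{extime2_sub_strogly}. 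No Lipschitz assumption is used at any point; the only thing to get right is the choice of weights $k$, which makes the coefficients $(k-1)k$ and $k(k+1)$ match.

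\textbf{Step 4 (Lipschitz case).} If $f$ is $M_f$-Lipschitz, then \eqref{lip_2} gives $\|\nabla f(x_k)\|_*\le M_f$, and $k/(k+1)<1$. The right-hand side of \eqref{extime2_sub_strogly} is therefore at most $\frac{2M_f^2}{\mu(N+1)}$. This is at most $\varepsilon$ once $N\ge \frac{2M_f^2}{\mu\varepsilon}$, which is the claimed $\mathcal O\bigl(\frac{M_f^2}{\mu\varepsilon}\bigr)$ iteration count.
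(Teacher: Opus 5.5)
Your proposal is correct and follows essentially the same route as the paper: optimality condition plus the three points identity, Fenchel--Young absorbed by $V_\psi(x_{k+1},x_k)$ via \eqref{eq_breg}, relative strong convexity at $(x_k,x_*)$, weighting by $k$, telescoping, and Jensen. The only cosmetic difference is that you fold $-\mu V_\psi(x_*,x_k)$ into the coefficient $\frac{\mu(k-1)}{2}$ before summing, as the paper itself does in the proof of Theorem~\ref{theo_free_mirror_strongly_inexact}. Here the paper instead sums $k(k+1)\bigl(V_\psi(x_*,x_k)-V_\psi(x_*,x_{k+1})\bigr)$ and cancels $2\mu\sum_{k} k\,V_\psi(x_*,x_k)$ afterwards, which is the same computation.
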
 

\begin{proof}
Let $F(x) :=  \langle \nabla f(x_k), x   \rangle + \frac{1}{\gamma_k} V_{\psi}(x,x_k)$, then from line \eqref{line3_mirror} in Algorithm \ref{alg_mirror_descent} we have $x_{k+1} =\arg\min\limits_{x \in Q} \tilde{f}(x) $. By the optimality condition, we get
$$
    \langle \nabla F(x_{k+1}), x - x_{k+1} \rangle \geq 0, \quad \forall x \in Q.
$$
Thus, 
$$
    \langle \gamma_k \nabla f(x_k) + \nabla \psi(x_{k+1})- \nabla \psi(x_k), x - x_{k+1} \rangle \geq 0, \quad \forall x \in Q.
$$
In particular, for $x = x_*$, we have
\begin{equation}\label{eq_yuio}
    \langle \nabla \psi(x_k) -\nabla \psi(x_{k+1}) - \gamma_k \nabla f(x_k), x_* - x_{k+1} \rangle \leq 0.
\end{equation}
Thus, we get
\begin{align}
    \gamma_k \langle \nabla f(x_k), x_k - x_* \rangle \nonumber
    & = \underbrace{\langle \nabla \psi(x_k) -\nabla \psi(x_{k+1}) - \gamma_k \nabla f(x_k), x_* - x_{k+1} \rangle}_{:=J_1} \nonumber 
    \\& \;\;\;\; + \underbrace{\langle \nabla \psi(x_{k+1}) -\nabla \psi(x_k), x_* - x_{k+1}  \rangle}_{: = J_2}  \nonumber
    \\& \;\;\;\; + \underbrace{\langle \gamma_k \nabla f(x_k), x_k - x_{k+1}  \rangle}_{: = J_3} . \label{uiop452lkj} 
\end{align} 

From \eqref{eq_yuio}, we find $J_1 \leq 0$. By Lemma \ref{three_points_lemma}, we have 
\begin{align*}
    J_2 & = - \langle \nabla \psi(x_k) -\nabla \psi(x_{k+1}), x_* - x_{k+1}  \rangle
    \\& = - \left( V_{\psi} (x_*, x_{k+1}) + V_{\psi} (x_{k+1}, x_k) - V_{\psi} (x_*, x_k) \right) 
    \\& = V_{\psi} (x_*, x_k) - V_{\psi} (x_*, x_{k+1}) - V_{\psi} (x_{k+1}, x_k). 
\end{align*} 

By the Fenchel-Young inequality \eqref{Fenchel_Young_ineq}, with $\lambda = 1 > 0 $, we find 
$$
    J_3 \leq \frac{\gamma_k^2}{2} \|\nabla f(x_k)\|_*^2 + \frac{1}{2} \|x_k - x_{k+1}\|^2. 
$$
Therefore, by combining with \eqref{uiop452lkj}, we get
\begin{align*}
    \gamma_k \langle \nabla f(x_k), x_k - x_* \rangle & \leq V_{\psi} (x_*, x_k) - V_{\psi} (x_*, x_{k+1}) - V_{\psi} (x_{k+1}, x_k)  
    \\& \quad + \frac{\gamma_k^2}{2} \|\nabla f(x_k)\|_*^2  + \frac{1}{2} \|x_k - x_{k+1}\|^2. 
\end{align*}
From \eqref{eq_breg}, we have 
$$
    V_{\psi} (x_{k+1}, x_k) \geq \frac{1}{2} \|x_{k+1} - x_k\|^2 .
$$
Thus, we get the following inequality
\begin{equation*}
    \gamma_k \langle \nabla f(x_k), x_k - x_* \rangle \leq  \frac{\gamma_k^2}{2} \|\nabla f(x_k)\|_*^2 +  V_{\psi} (x_*, x_k) - V_{\psi} (x_*, x_{k+1}). 
\end{equation*}
That is, 
\begin{equation}\label{eq_2kjhg}
     \langle \nabla f(x_k), x_k - x_* \rangle \leq  \frac{\gamma_k}{2} \|\nabla f(x_k)\|_*^2 + \frac{1}{\gamma_k} \left(  V_{\psi} (x_*, x_k) - V_{\psi} (x_*, x_{k+1}) \right). 
\end{equation}

Since $f$ is relatively $\mu$-strongly convex, from \eqref{gfdsdfg8521}, with $y: = x_*, x:= x_k$, we get
$$
    f(x_k) - f(x_*) + \mu V_{\psi}(x_*, x_k) \leq \langle \nabla f(x_k), x_k - x_*  \rangle. 
$$
Thus, from \eqref{eq_2kjhg} with $\gamma_k = \frac{2}{\mu (k+1)} \, \forall k \geq 1$, we get 
\begin{align*}
    & \quad \;\; 2k (f(x_k) - f(x_*)) + 2k \mu V_{\psi}(x_*, x_k) 
    \\& \leq \frac{2k}{\mu (k+1)} \|\nabla f(x_k)\|_*^2 + \mu k (k+1) \left(  V_{\psi} (x_*, x_k) - V_{\psi} (x_*, x_{k+1}) \right).
\end{align*}
By taking the summation of both sides of the last inequality from $k = 1$ to $k =N$ (for any $N\geq 1$), we find
\begin{align*}
    & \quad\; \sum_{k = 1}^{N} 2 k \left(f(x_k) - f(x)\right) + 2 \mu \sum_{k = 1}^{N} k V_{\psi}(x_*, x_k)
    \\& \leq \frac{2}{\mu} \sum_{k = 1}^{N} \frac{k}{k+1}\|\nabla f(x_k)\|_*^2 + \mu \sum_{k = 1}^{N} k (k+1) \left(  V_{\psi} (x_*, x_k) - V_{\psi} (x_*, x_{k+1}) \right)
    \\& = \frac{2}{\mu} \sum_{k = 1}^{N} \frac{k}{k+1}\|\nabla f(x_k)\|_*^2 + \mu \Big( 1 \cdot 2\left( V_{\psi} (x_*, x_1) - V_{\psi} (x_*, x_{2}) \right)   
    \\& \quad + 2 \cdot 3\left( V_{\psi} (x_*, x_2) - V_{\psi} (x_*, x_{3}) \right) + \ldots + (N-1)N\left( V_{\psi} (x_*, x_{N-1}) - V_{\psi} (x_*, x_{N}) \right) 
    \\& \quad + N (N+1)\left( V_{\psi} (x_*, x_N) - V_{\psi} (x_*, x_{N+1}) \right) \Big)
    \\& =\frac{2}{\mu} \sum_{k = 1}^{N} \frac{k}{k+1}\|\nabla f(x_k)\|_*^2 + \mu \Big( 2 V_{\psi} (x_*, x_{1}) + 4 V_{\psi} (x_*, x_{2}) + 6 V_{\psi} (x_*, x_{3}) + \ldots +
    \\& \qquad \qquad \qquad \qquad \qquad \qquad \quad \;\; + 2 N V_{\psi} (x_*, x_{N-1})  -  N(N+1) V_{\psi} (x_*, x_{N+1})\Big)
    \\& \leq \frac{2}{\mu} \sum_{k = 1}^{N} \frac{k}{k+1}\|\nabla f(x_k)\|_*^2 + 2\mu \left(V_{\psi} (x_*, x_{1}) + 2 V_{\psi} (x_*, x_{2}) + \ldots + N V_{\psi} (x_*, x_{N})\right)
    \\& = \frac{2}{\mu} \sum_{k = 1}^{N} \frac{k}{k+1}\|\nabla f(x_k)\|_*^2 +2 \mu \sum_{k = 1}^{N} k V_{\psi} (x_*, x_{k}). 
\end{align*}
Therefore, for any $N \geq 1$, we get 
$$
    \frac{1}{N(N+1)}\sum_{k = 1}^{N} 2 k \left(f(x_k) - f(x_*)\right) \leq \frac{2}{\mu N(N+1)} \sum_{k = 1}^{N} \frac{k}{k+1}\|\nabla f(x_k)\|_*^2. 
$$
This means 
$$
    \frac{2}{N(N+1)}\sum_{k = 1}^{N} k f(x_k)  -\frac{2}{N(N+1)}\sum_{k = 1}^{N}  k f(x_*)\leq \frac{2}{\mu N(N+1)} \sum_{k = 1}^{N} \frac{k}{k+1}\|\nabla f(x_k)\|_*^2.
$$
But, since $\sum_{k = 1}^{N} k = \frac{N(N+1)}{2}$, we get
$$
    \left(\frac{1}{\sum_{k = 1}^{N} k}\sum_{k = 1}^{N} k f(x_k)\right)  - f(x_*)\leq \frac{1}{\mu \sum_{k = 1}^{N} k } \sum_{k = 1}^{N} \frac{k}{k+1}\|\nabla f(x_k)\|_*^2.
$$
Because $f$ is convex, by Jensen's inequality, we find
$$
    f \left(\frac{1}{\sum_{k = 1}^{N} k}\sum_{k = 1}^{N} k x_k \right) \leq \frac{1}{\sum_{k = 1}^{N} k}\sum_{k = 1}^{N} k f(x_k). 
$$
Therefore, with $\widehat{x} : = \frac{1}{\sum_{k = 1}^{N} k} \sum_{k = 1}^{N} k x_k$, we get 
\begin{align*}
    f(\widehat{x}) - f(x_*) &\leq  \frac{1}{\mu \sum_{k = 1}^{N} k} \sum_{k = 1}^{N} \frac{k \|\nabla f(x_k)\|_*^2}{k+1}  \nonumber
    \\&= \frac{2}{\mu N (N+1) } \sum_{k = 1}^{N} \frac{k \|\nabla f(x_k)\|_*^2}{k+1}.
\end{align*}


Moreover, if $f$ is $M_f$-Lipschitz, then $\|\nabla f(x_k)\|_* \leq M_f \, \forall k \geq 1$. Thus, we have 
\begin{align}\label{classic_rate_bach}
    f(\widehat{x}) - f(x_*) & \leq \frac{2}{\mu N(N+1)} \sum_{k = 1}^{N} \frac{k M_f^2}{k+1} \nonumber
    \\& \leq \frac{2}{\mu N(N+1)}\sum_{k = 1}^{N} M_f^2 = \frac{2 M_f^2}{\mu (N+1)}. 
\end{align}

Now, let us set $\frac{2M_f^2}{\mu (N+1)} \leq \varepsilon$, then $N \geq \frac{2M_f^2}{ \mu \, \varepsilon} - 1$. This means that to achieve the accuracy of $\varepsilon$ with respect to the function, i.e., $f(\widehat{x}) - f(x_*) \leq \varepsilon$, it is necessary $\mathcal{O} \left(\frac{M_f^2}{\mu\, \varepsilon}\right)$ iterations.
\end{proof}

\begin{remark}
Note that the estimate in \eqref{classic_rate_bach} is the classical one, and the convergence rate $\mathcal{O}(1/\varepsilon)$ is optimal \cite{Bach2012simpler}. In our work, this rate is recovered as a special case of a more general analysis that applies to Lipschitz-free objective functions.
\end{remark}

\begin{remark}\label{remark_cov_variable}
From \eqref{gfdsdfg8521} and the optimality condition, we have 
\begin{equation*}
    f(\widehat{x}) - f(x_*) \geq  \mu V_{\psi} (\widehat{x}, x_*) \quad \text{where} \quad \widehat{x} : = \frac{1}{\sum_{k = 1}^{N} k} \sum_{k = 1}^{N} k x_k.
\end{equation*}
Thus, from \eqref{extime2_sub_strogly}  we find
\[
    V_{\psi} (\widehat{x}, x_*) \leq \frac{2}{\mu^2 N (N+1) } \sum_{k = 1}^{N} \frac{k \|\nabla f(x_k)\|_*^2}{k+1},
\]
or
\[
    \|\widehat{x} - x_*\|^2 \leq \frac{4}{\mu^2 N (N+1) } \sum_{k = 1}^{N} \frac{k \|\nabla f(x_k)\|_*^2}{k+1}.
\]
That is,  
\begin{equation}\label{estimate_variable_new}
    \|\widehat{x} - x_*\| \leq \frac{2}{ \mu\sqrt{ N (N+1)} } \sqrt{\sum_{k = 1}^{N} \frac{k \|\nabla f(x_k)\|_*^2}{k+1}}.
\end{equation}

Moreover, as a special case, if $f$ is $M_f$-Lipschitz, then we get the following inequality
\begin{equation}\label{estimate_variable_old}
    \|\widehat{x} - x_*\| \leq \frac{2 M_f}{ \mu \sqrt{N+1}},
\end{equation}
which is the same inequality in \cite{Bach2012simpler}
\end{remark}

\section{Lipschitz-free mirror descent method for strongly convex functions with inexact information}\label{alalysis_inexact}

In Section~\ref{sec:LipFreeMirrorStrongly}, we derived an estimate for the quality of an approximate solution to problem~\eqref{main_constrained_prob} obtained by the Mirror Descent Algorithm~\ref{alg_mirror_descent}, assuming access to an exact subgradient at each iteration. We now relax this assumption and consider the more realistic setting in which only an inexact subgradient $\widetilde{\nabla} f$ is available. In particular, we assume that the inexact subgradient satisfies either a relative or an absolute inexactness condition. That is, we have access to $\widetilde{\nabla} f$ such that
\begin{equation}\label{inexact_grad}
    \|\widetilde{\nabla} f(x) - \nabla f(x)\|_* \leq A. \quad \Longrightarrow \quad  \|\widetilde{\nabla} f(x)\|_* \leq B,
\end{equation}
where, 
\begin{equation}\label{A_inexact_mirror}
    A = \begin{cases}
    \alpha \|\nabla f(x)\|_*, & \text{for the relative inexactness} \, (\alpha \in [0,1));\\
    \Delta, & \text{for the absolute inexactness} \, (\Delta \geq 0),
    \end{cases}
\end{equation}
and 
\begin{equation}\label{B_inexact_mirror}
    B = \begin{cases}
    (1+\alpha) \|\nabla f(x)\|_*, & \text{for the relative inexactness;}\\
    \Delta +  \|\nabla f(x)\|_*, & \text{for the absolute inexactness.}
    \end{cases}
\end{equation}

For Algorithm \ref{alg_mirror_descent}, with inexact subgradient, we have the following result. 

\begin{theorem}\label{theo_free_mirror_strongly_inexact}
Let $Q \subset$ be a compact convex set, and $f: Q \longrightarrow \mathbb{R}$ be a relatively $\mu$-strongly convex. Then, for problem \eqref{main_constrained_prob}, by Algorithm \ref{alg_mirror_descent} (with an inexact subgradient), with $\gamma_k = \frac{4}{\mu (k+1)}$, we have
\begin{enumerate}
\item with relative inexact subgradient $\widetilde{\nabla}f$, it satisfies the following
\begin{equation}\label{extime_relative_mirror_free}
    f(\widehat{x}) - f(x_*) \leq  \frac{4(1+ \alpha)^2}{\mu N(N+1)} \sum_{k = 1}^{N} \|\nabla f(x_k)\|_*^2 + \frac{2 \alpha^2}{\mu N (N+1)} \sum_{k = 1}^{N} k \|\nabla f(x_k)\|_*^2,
\end{equation}  

\item with absolute inexact subgradient $\widetilde{\nabla}f$, it satisfies the following     
\begin{equation}\label{extime_absol_mirror_free}
    f(\widehat{x}) - f(x_*) \leq  \frac{4}{\mu N(N+1)} \sum_{k = 1}^{N} \left( \Delta + \|\nabla f(x_k)\|_*\right)^2 + \frac{\Delta^2}{\mu}, 
\end{equation}
\end{enumerate}
where $\widehat{x} = \frac{2}{N(N+1)} \sum_{k = 1}^{N} k x_k$.  
\end{theorem}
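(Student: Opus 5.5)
I will repeat the argument of Theorem \ref{theo2_rate_projsubgrad_strongly}, now with the mirror step using $\widetilde{\nabla} f(x_k)$ in place of $\nabla f(x_k)$. The optimality condition for line \ref{line3_mirror} together with the three points identity (Lemma \ref{three_points_lemma}), the Fenchel--Young inequality \eqref{Fenchel_Young_ineq} with $\lambda=1$, and \eqref{eq_breg} give, exactly as in \eqref{eq_2kjhg},
\[
\langle \widetilde{\nabla} f(x_k), x_k - x_*\rangle \leq \frac{\gamma_k}{2}\|\widetilde{\nabla} f(x_k)\|_*^2 + \frac{1}{\gamma_k}\left(V_\psi(x_*,x_k)-V_\psi(x_*,x_{k+1})\right).
\]
Next I split $\langle \nabla f(x_k), x_k-x_*\rangle = \langle \widetilde{\nabla} f(x_k), x_k-x_*\rangle + \langle \nabla f(x_k)-\widetilde{\nabla} f(x_k), x_k-x_*\rangle$. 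The error term is bounded by Fenchel--Young with a parameter chosen so that the resulting $\|x_k-x_*\|^2$ piece is absorbed by half of the strong convexity term:
\[
\langle \nabla f(x_k)-\widetilde{\nabla} f(x_k), x_k-x_*\rangle \leq \frac{\mu}{4}\|x_k-x_*\|^2 + \frac{1}{\mu}A_k^2 \leq \frac{\mu}{2}V_\psi(x_*,x_k)+\frac{A_k^2}{\mu},
\]
using \eqref{eq_breg}. Combining with relative strong convexity \eqref{gfdsdfg8521} at $y=x_*$, $x=x_k$ leaves $f(x_k)-f(x_*)+\frac{\mu}{2}V_\psi(x_*,x_k)$ on the left. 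This halving of the effective modulus to $\mu/2$ is exactly why the step size becomes $\gamma_k=\frac{4}{\mu(k+1)}=\frac{2}{(\mu/2)(k+1)}$.

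With this $\gamma_k$, I multiply by $2k$. The telescoping computation of Theorem \ref{theo2_rate_projsubgrad_strongly} goes through verbatim with $\mu$ replaced by $\mu/2$: the weights satisfy $\frac{2k}{\gamma_k}=\frac{\mu}{2}k(k+1)$, and the leftover terms $\frac{\mu}{2}\cdot 2k\,V_\psi(x_*,x_k)$ cancel exactly as before. What remains is
\[
\sum_{k=1}^N 2k\,(f(x_k)-f(x_*)) \leq \sum_{k=1}^N \frac{4k}{\mu(k+1)}\|\widetilde{\nabla} f(x_k)\|_*^2 + \frac{2}{\mu}\sum_{k=1}^N k A_k^2 .
\]
I then bound $\frac{k}{k+1}\le 1$ and $\|\widetilde{\nabla} f(x_k)\|_*\le B_k$ from \eqref{inexact_grad}, divide by $N(N+1)$, and apply Jensen to $\widehat{x}$.

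The two cases then follow by substitution:
\begin{itemize}
\item In the relative case, $B_k=(1+\alpha)\|\nabla f(x_k)\|_*$ and $A_k=\alpha\|\nabla f(x_k)\|_*$. This gives \eqref{extime_relative_mirror_free} with coefficients $\frac{4(1+\alpha)^2}{\mu N(N+1)}$ and $\frac{2\alpha^2}{\mu N(N+1)}$.
\item In the absolute case, $A_k=\Delta$, and $\frac{2}{\mu N(N+1)}\sum_{k=1}^N k\Delta^2=\frac{\Delta^2}{\mu}$, which gives \eqref{extime_absol_mirror_free}.
\end{itemize}

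The only delicate step is the Fenchel--Young split of the error term. The constant must be chosen so that exactly $\frac{\mu}{2}V_\psi(x_*,x_k)$ is consumed, leaving the $\mu/2$ modulus that matches $\gamma_k$, while producing the coefficient $\frac{1}{\mu}$ on $A_k^2$. This requires reading \eqref{Fenchel_Young_ineq} with $\|b\|_*^2$ (the squared dual norm) and using \eqref{eq_breg} in the direction $\|x_k-x_*\|^2\le 2V_\psi(x_*,x_k)$. Everything else is the telescoping already carried out in Theorem \ref{theo2_rate_projsubgrad_strongly}.
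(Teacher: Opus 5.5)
Your proposal is correct and follows essentially the same argument as the paper: the inexact analogue of \eqref{eq_2kjhg}, the bound $\langle \nabla f(x_k)-\widetilde{\nabla} f(x_k), x_k-x_*\rangle \le \frac{A^2}{\mu}+\frac{\mu}{4}\|x_k-x_*\|^2 \le \frac{A^2}{\mu}+\frac{\mu}{2}V_\psi(x_*,x_k)$, and then multiplication by $2k$ with $\gamma_k=\frac{4}{\mu(k+1)}$. With that choice the Bregman terms telescope to $\frac{\mu}{2}\sum_k\bigl(k(k-1)V_\psi(x_*,x_k)-k(k+1)V_\psi(x_*,x_{k+1})\bigr)\le 0$, after which Jensen and substitution of $A$ and $B$ finish the proof. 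Your remark that \eqref{Fenchel_Young_ineq} must be read with $\|b\|_*^2$ is also right; the paper uses it that way in Theorem~\ref{theo2_rate_projsubgrad_strongly}.
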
 
\begin{proof}
Similarly, as in the proof of Theorem \ref{theo2_rate_projsubgrad_strongly}, by using an inexact subgradient $\widetilde{\nabla} f$, we get an analogue of \eqref{eq_2kjhg}
\begin{equation}\label{eq_squareww}
    \langle \widetilde{\nabla} f(x_k), x_k - x_* \rangle \leq  \frac{\gamma_k}{2} \|\widetilde{\nabla} f(x_k)\|_*^2 + \frac{1}{\gamma_k} \left(  V_{\psi} (x_*, x_k) - V_{\psi} (x_*, x_{k+1}) \right).
\end{equation}

Because $f$ is relatively $\mu$-strongly convex (relative to $\psi(\cdot)$), we have 
\begin{align*}
    & \qquad \;\; f(x_k) - f(x_*)  
    \\& \;\, \leq \; \; \langle \nabla f(x_k), x_k - x_* \rangle  - \mu V_{\psi} (x_*, x_k)
    \\& \;\, = \; \; \langle \widetilde{\nabla} f(x_k), x_k - x_* \rangle + \langle \nabla f(x_k) - \widetilde{\nabla} f(x_k) , x_k - x_* \rangle  - \mu V_{\psi} (x_*, x_k) 
    \\& \;\, \leq \; \; \langle \widetilde{\nabla} f(x_k) , x_k - x_* \rangle + A\|x_k - x_*\| - \mu V_{\psi} (x_*, x_k) 
    \\& \; \, \stackrel{\eqref{eq_squareww}}{\leq} \frac{\gamma_k}{2} \|\widetilde{\nabla}f(x_k)\|_*^2 + \frac{1}{\gamma_k} V_{\psi} (x_*, x_{k}) - \frac{1}{\gamma_k} V_{\psi} (x_*, x_{k+1})  - \mu V_{\psi} (x_*, x_k) 
    \\& \qquad \;\;  + A   \|x_k - x_*\|
    \\&  \; \, \leq \; \,  \frac{\gamma_k}{2} \|\widetilde{\nabla}f(x_k)\|_*^2+ \frac{1}{\gamma_k} V_{\psi} (x_*, x_{k}) - \frac{1}{\gamma_k} V_{\psi} (x_*, x_{k+1})  - \mu V_{\psi} (x_*, x_k) 
    \\& \qquad \;\; + \frac{A^2}{\mu} +  \frac{\mu}{4}   \|x_k - x_*\|^2
    \\& \; \, \stackrel{\text{(b)}}{\leq} \frac{\gamma_k}{2} \|\widetilde{\nabla}f(x_k)\|_*^2 + \left(\frac{1}{\gamma_k} - \frac{\mu}{2}\right)V_{\psi} (x_*, x_k)- \frac{1}{\gamma_k} V_{\psi} (x_*, x_{k+1}) + \frac{A^2}{\mu}. 
\end{align*}
Where, in (b) we used the inequality $V_{\psi} (x_*, x_k) \geq \frac{1}{2}\|x_k - x_*\|^2$. 

Since $\gamma_k = \frac{4}{\mu (k+1)}$, then by multiplying both sides of the last inequality by $2 k \geq 2 \, \forall k \geq 1$, taking the summation from $k = 1$ to $k = N$, we get
\begin{align*}
    & \quad 2 \sum_{k = 1}^{N} k f(x_k) - N (N+1) f(x_*) 
    \\& \leq \frac{4}{\mu}\sum_{k = 1}^{N} \frac{k}{k+1} \|\widetilde{\nabla}f(x_k)\|_*^2 + \frac{\mu}{2} \sum_{k = 1}^{N} \left(k(k-1) V_{\psi} (x_*, x_{k}) - k(k+1)V_{\psi} (x_*, x_{k+1}) \right) 
    \\& \quad + \frac{2}{\mu} \sum_{k = 1}^{N} kA^2
    \\& \leq   \frac{4}{\mu} \sum_{k = 1}^{N}\frac{k}{k+1} \|\widetilde{\nabla}f(x_k)\|_*^2+ \frac{2}{\mu} \sum_{k = 1}^{N} kA^2.
\end{align*}
Thus, 
\begin{align*}
    \frac{2}{N (N+1)}\sum_{k = 1}^{N} k f(x_k) -  f(x_*) & \leq \frac{4}{\mu N (N+1)} \sum_{k = 1}^{N} \frac{k}{k+1} \|\widetilde{\nabla} f(x_k)\|_*^2
    \\& \quad +  \frac{2}{\mu N (N+1)} \sum_{k = 1}^{N} k A^2.
\end{align*}

Because $f$ is a convex function, with $\widehat{x} = \frac{2}{N(N+1)} \sum_{k = 1}^{N} k x_k$, we get  
\[
    f(\widehat{x}) - f(x_*) \leq \frac{4}{\mu N (N+1)} \sum_{k = 1}^{N} \frac{k}{k+1} \|\widetilde{\nabla} f(x_k)\|_*^2 + \frac{2}{\mu N (N+1)} \sum_{k = 1}^{N} k A^2. 
\]

\begin{enumerate}
\item [\textbf{(1)}] For the relative inexactness in subgradient, from \eqref{A_inexact_mirror}, and \eqref{B_inexact_mirror}, we get 
\begin{align*}
    f(\widehat{x}) - f(x_*) & \leq \frac{4(1+ \alpha)^2}{\mu N (N+1)} \sum_{k = 1}^{N} \frac{k}{k+1} \|\nabla f(x_k)\|_*^2 
    \\& \quad + \frac{2 \alpha^2}{\mu N (N+1)} \sum_{k = 1}^{N} k \|\nabla f(x_k)\|_*^2
    \\& \leq \frac{4(1+ \alpha)^2}{\mu N (N+1)} \sum_{k = 1}^{N}  \|\nabla f(x_k)\|_*^2 + \frac{2 \alpha^2}{\mu N (N+1)} \sum_{k = 1}^{N} k \|\nabla f(x_k)\|_*^2.
\end{align*}
Which is the desired inequality \eqref{extime_relative_mirror_free}.

\item [\textbf{(2)}] For the absolute inexactness in subgradient, from \eqref{A_inexact_mirror}, and \eqref{B_inexact_mirror}, we get
\begin{align*}
    f(\widehat{x}) - f(x_*) & \leq \frac{4}{\mu N (N+1)} \sum_{k = 1}^{N} \frac{k}{k+1} \left(\Delta + \|\nabla f(x_k)\|_*\right)^2 
    \\& \quad + \frac{2}{\mu N (N+1)} \sum_{k = 1}^{N} k \Delta^2
    \\& \leq \frac{4}{\mu N (N+1)} \sum_{k = 1}^{N} \left(\Delta + \|\nabla f(x_k)\|_*\right)^2 + \frac{\Delta^2}{\mu}. 
\end{align*}
Which is the desired inequality \eqref{extime_absol_mirror_free}.
\end{enumerate}
\end{proof}

\begin{remark}
Similarly to Remark \ref{remark_cov_variable}, from \eqref{extime_relative_mirror_free} we find
\begin{equation*}
    \|\widehat{x} - x_*\|^2 \leq  \frac{8(1+ \alpha)^2}{\mu^2 N(N+1)} \sum_{k = 1}^{N} \|\nabla f(x_k)\|_*^2 + \frac{4 \alpha^2}{\mu^2 N (N+1)} \sum_{k = 1}^{N} k \|\nabla f(x_k)\|_*^2.
\end{equation*}
Therefore, we get
\begin{equation}\label{extime_var_relative}
    \|\widehat{x} - x_*\| \leq  \frac{2\sqrt{2}(1+ \alpha)}{\mu \sqrt{N(N+1)}} \sqrt{\sum_{k = 1}^{N} \|\nabla f(x_k)\|_*^2} + \frac{2 \alpha}{\mu\sqrt{ N (N+1)}} \sqrt{\sum_{k = 1}^{N} k \|\nabla f(x_k)\|_*^2}.
\end{equation}


Also, from \eqref{extime_absol_mirror_free} we find 
\begin{equation*}
    \|\widehat{x} - x_*\|^2 \leq  \frac{8}{\mu^2 N(N+1)} \sum_{k = 1}^{N} \left( \Delta + \|\nabla f(x_k)\|_*\right)^2 + \frac{2 \Delta^2}{\mu^2}. 
\end{equation*}
Therefore, we get 
\begin{equation}\label{extime_var_absol}
    \|\widehat{x} - x_*\| \leq  \frac{2\sqrt{2}}{\mu\sqrt{ N(N+1)} } \sqrt{\sum_{k = 1}^{N} \left( \Delta + \|\nabla f(x_k)\|_*\right)^2} + \frac{\sqrt{2}\Delta}{\mu}. 
\end{equation}
\end{remark}

\section{Numerical experiments}\label{sect_numerical}

To illustrate the advantages of the proposed analysis of the Mirror Descent algorithm for relatively strongly convex optimization problems without the Lipschitz continuity assumption, we consider a special setting in which the underlying space is $\mathbb{R}^n$ equipped with the Euclidean distance. In this setting, we conduct numerical experiments to compare the classical estimates of the quality of approximate solutions obtained by the subgradient method~\eqref{subgrad_method}. In particular, we compare estimates based on functional values~\eqref{estim_subgrad_func} and on the iterates~\eqref{estim_subgrad_var} with the corresponding bounds derived in our analysis, namely~\eqref{extime2_sub_strogly} and~\eqref{estimate_variable_new}, assuming access to exact subgradient information. We also investigate the behavior of the estimates~\eqref{extime_relative_mirror_free}, \eqref{extime_absol_mirror_free}, \eqref{extime_var_relative}, and~\eqref{extime_var_absol} as functions of the number of iterations for different values of the inexactness parameters $\alpha$ and $\Delta$.

The experiments are conducted over the feasible set given by the Euclidean ball centered at the origin with radius $R$, namely, 
$$
    Q = \left\{ x \in \mathbb{R}^n : \|x\|_2 \leq R \right\},
$$
with the initial point chosen as $x_1 = \left( \frac{R}{\sqrt{n}}, \ldots, \frac{R}{\sqrt{n}} \right) \in Q.$ We consider two illustrative examples.
\begin{example}\label{ex_1}
The objective function in this example is
$$
    f(x) = \|x\|_2 + 2 \gamma \|x\|_2^2.
$$
This function is strongly convex with $\mu = 2 \gamma$, and Lipschitz continuous with $M_f = 1 + 2 \gamma R$. 
\end{example}

\begin{example}\label{ex_2}
Let $\{A_1, \ldots, A_2\}$ be a set of $m$ points in $\mathbb{R}^n$. The objective function in this example is 
$$
    f(x) = \max_{1 \leq i \leq m} \{\|x - A_i\|_2^2\}.
$$
This function is $2$-strongly convex and Lipschitz continuous with $M_f = 2\left(R + \max_{1 \leq i \leq m}\|A_i\|_2\right)$. 
\end{example}

The results are presented in Figures~\ref{fig1}--\ref{fig4}. From these figures, we observe that the theoretical estimates of the quality of the solution derived in this work, both in terms of functional values and iterates, are sharper than the classical, and they improve upon the corresponding estimates concluded in \cite{Bach2012simpler}.

\begin{figure}[htp]
    \minipage{0.50\textwidth}
    \includegraphics[width=\linewidth]{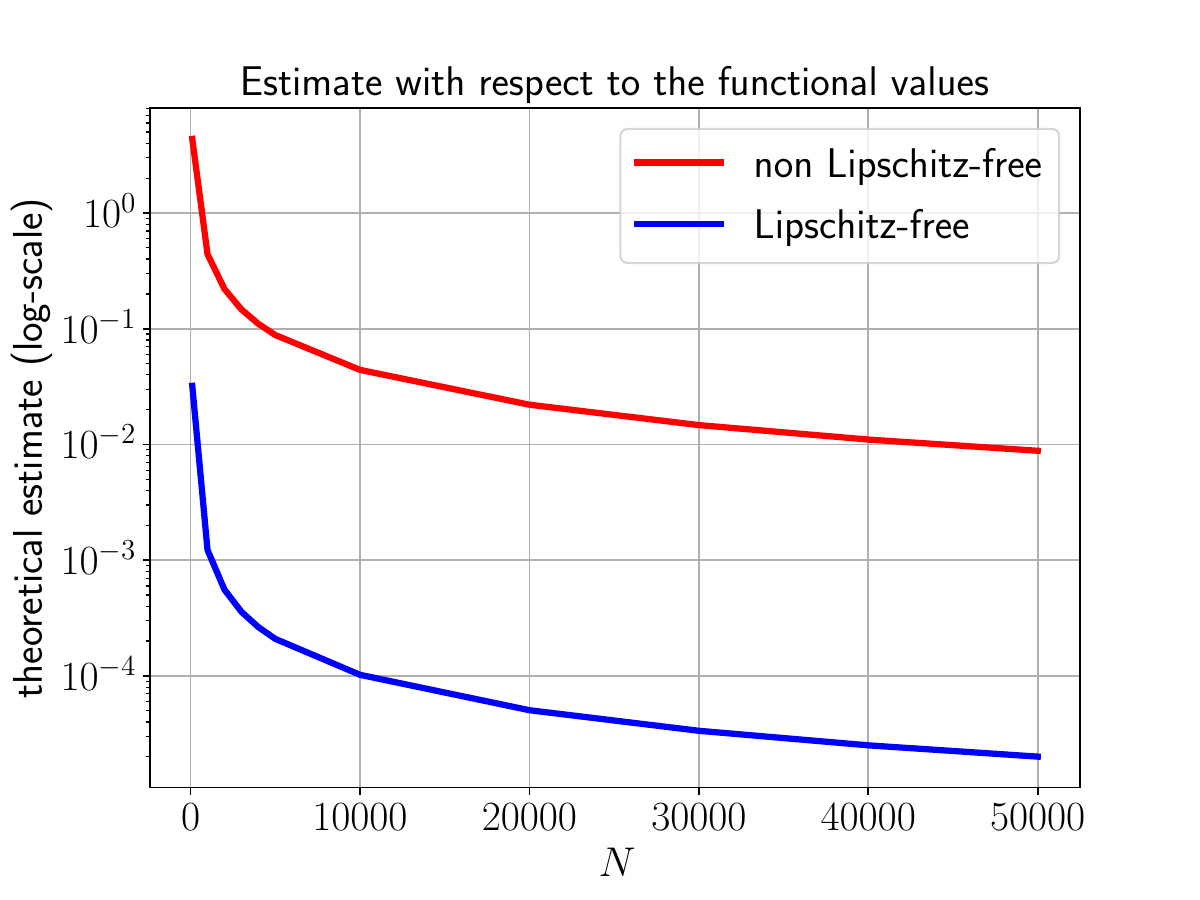}
    \endminipage
    \minipage{0.50\textwidth}
    \includegraphics[width=\linewidth]{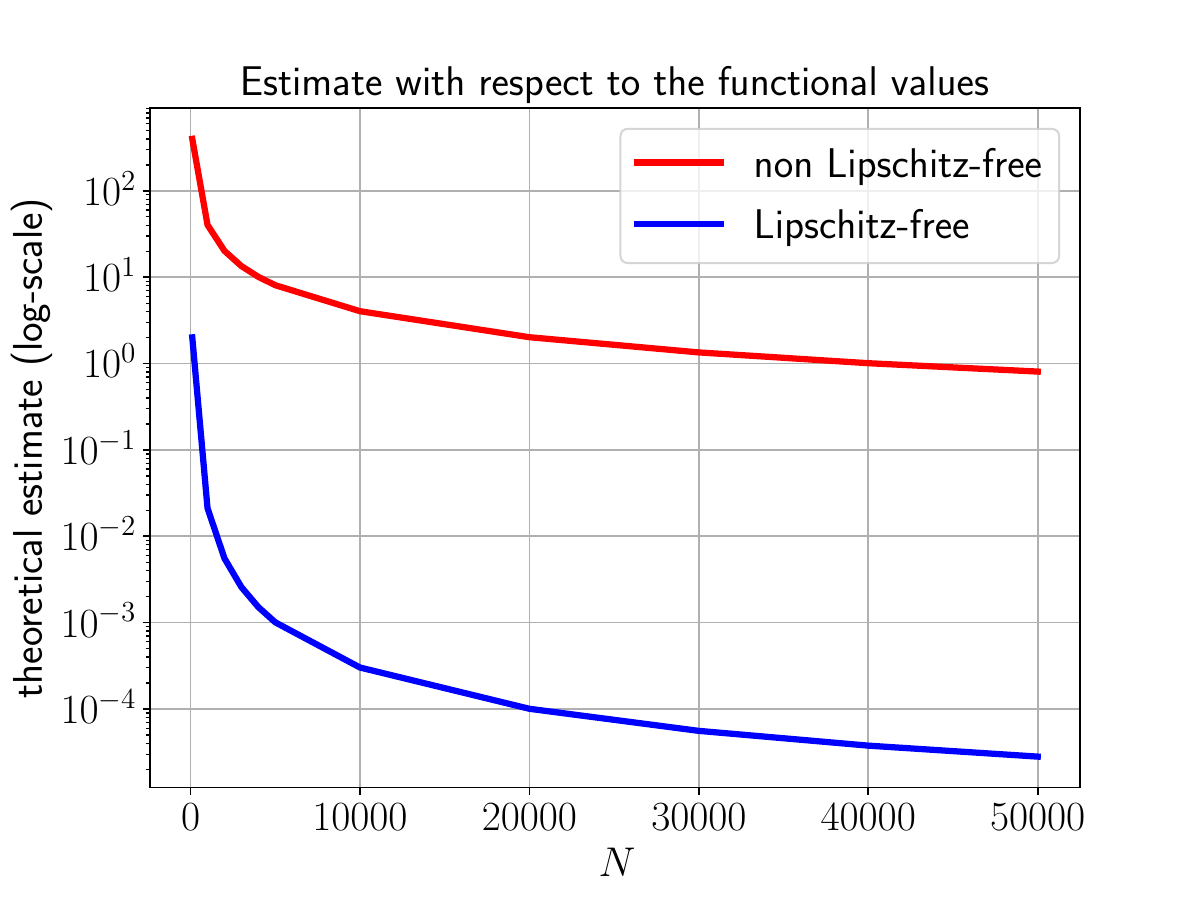}
    \endminipage
    \caption{Results of comparison of theoretical estimates with respect to the functional values \eqref{estim_subgrad_func} and \eqref{extime2_sub_strogly}, for Example \ref{ex_1}, with $n=1000, R = 10$ (left). And with $n = 1000, R = 100$ (right).}
    \label{fig1}
\end{figure} 

\begin{figure}[htp]
    \minipage{0.50\textwidth}
    \includegraphics[width=\linewidth]{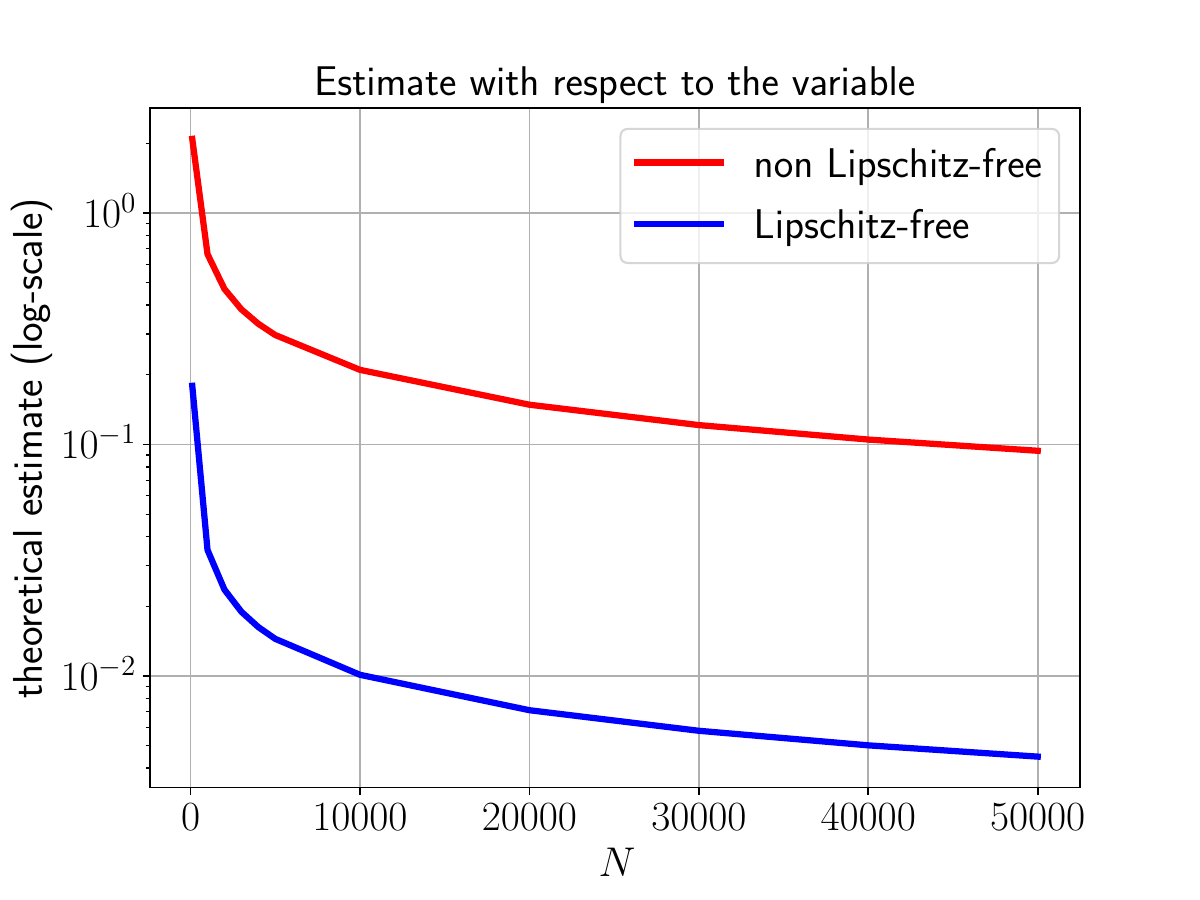}
    \endminipage
    \minipage{0.50\textwidth}
    \includegraphics[width=\linewidth]{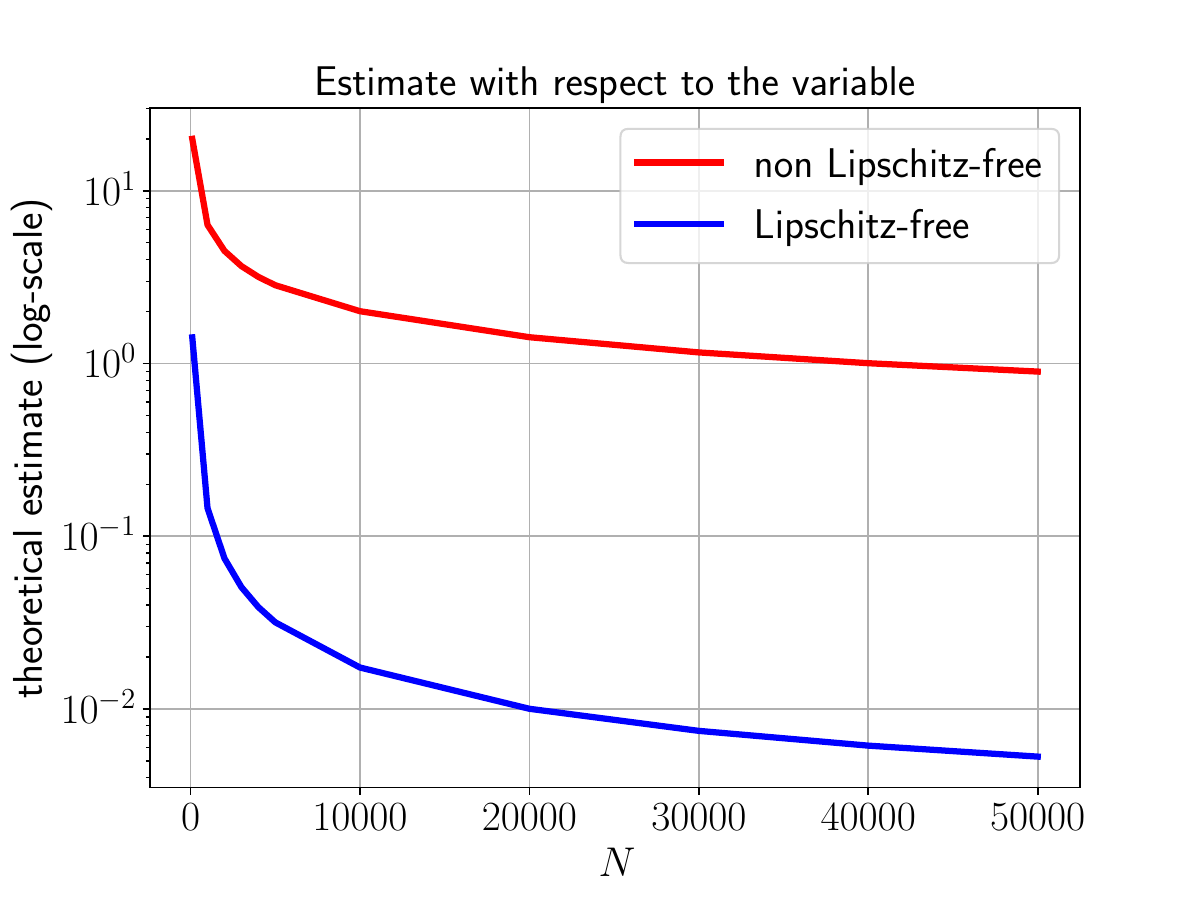}
    \endminipage
    \caption{Results of comparison of theoretical estimates with respect to the variable \eqref{estim_subgrad_var} and \eqref{estimate_variable_new}, for Example \ref{ex_1}, with $n=1000, R = 10$ (left). And with $n = 1000, R = 100$ (right).}
    \label{fig2}
\end{figure}

\begin{figure}[htp]
    \minipage{0.50\textwidth}
    \includegraphics[width=\linewidth]{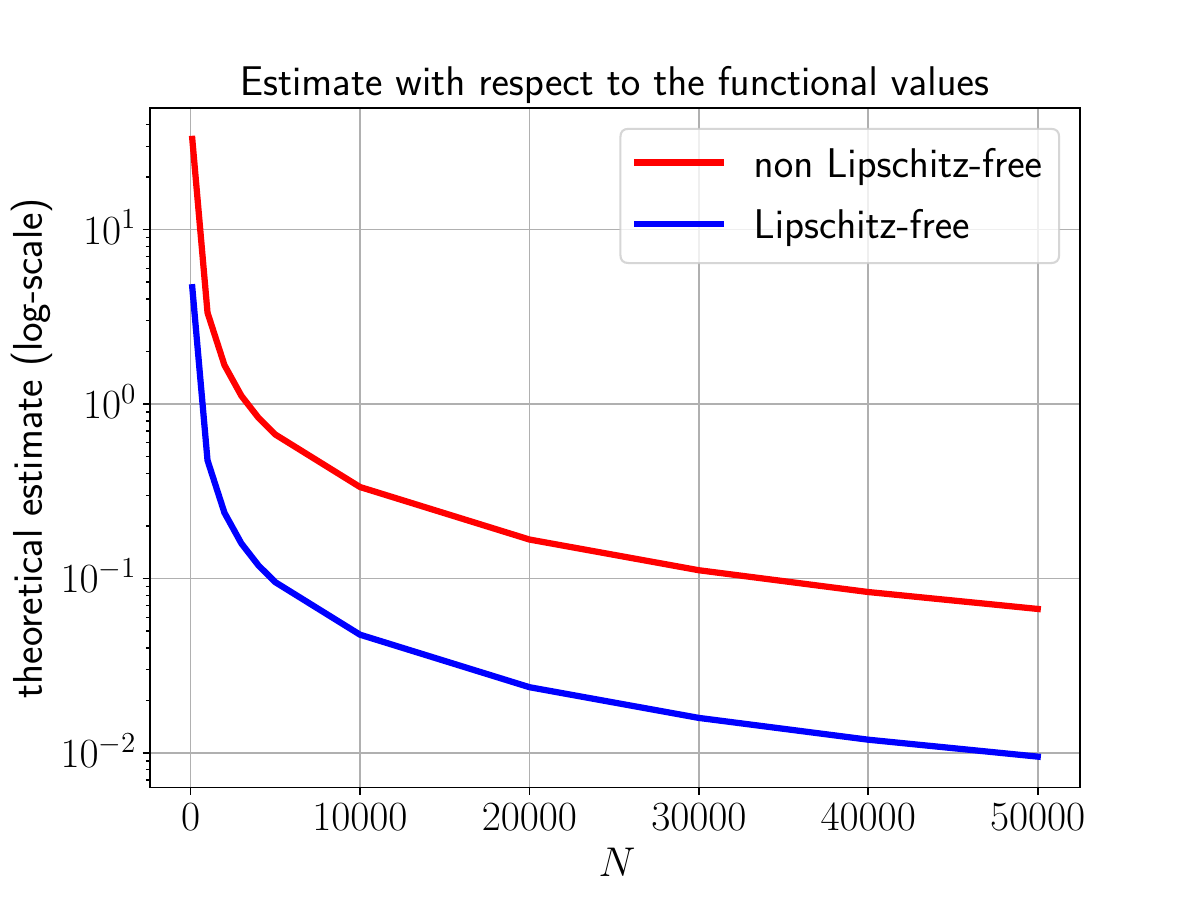}
    \endminipage
    \minipage{0.50\textwidth}
    \includegraphics[width=\linewidth]{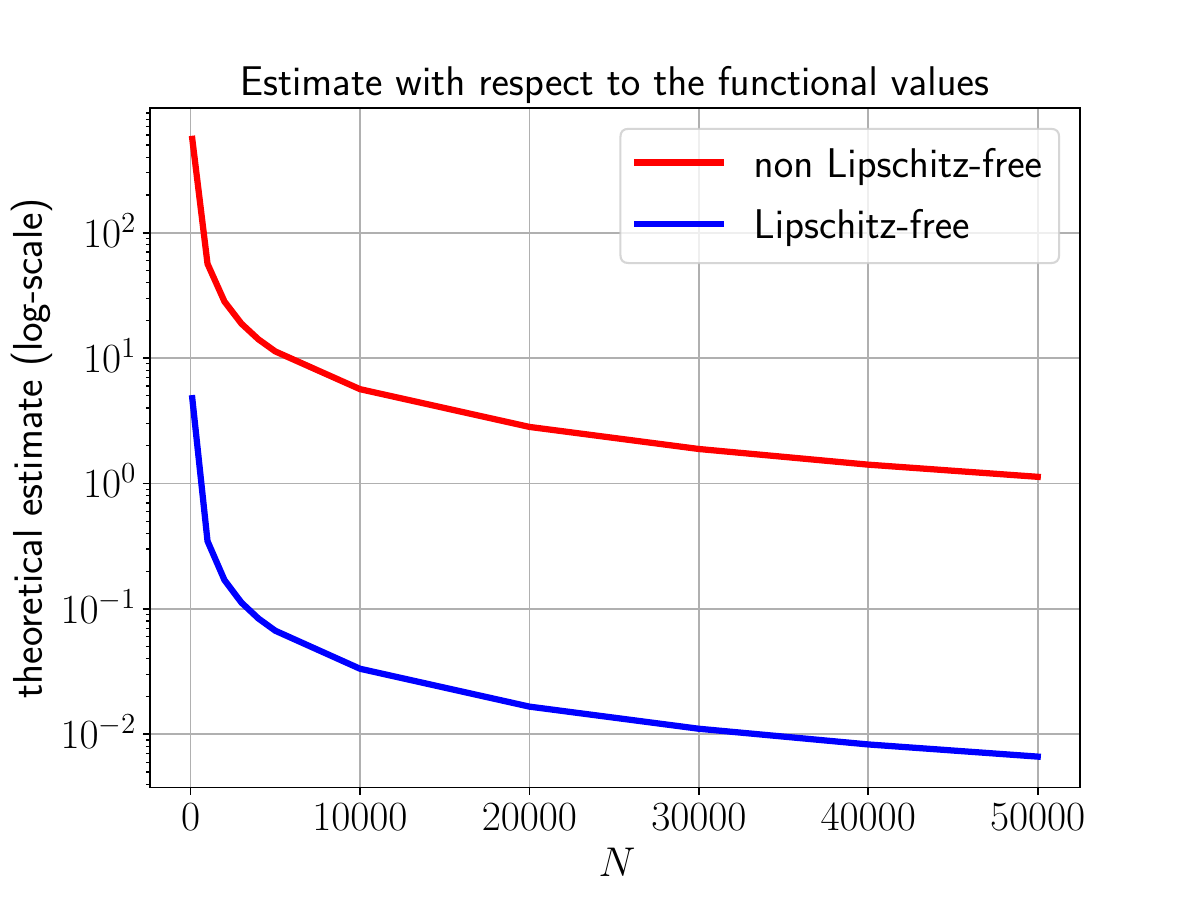}
    \endminipage
    \caption{Results of comparison of theoretical estimates with respect to the functional values \eqref{estim_subgrad_func} and \eqref{extime2_sub_strogly}, for Example \ref{ex_2}, with $m = 50, n=1000, R = 10$ (left). And with $m = 50, n = 1000, R = 100$ (right).}
    \label{fig3}
\end{figure} 

\begin{figure}[htp]
    \minipage{0.50\textwidth}
    \includegraphics[width=\linewidth]{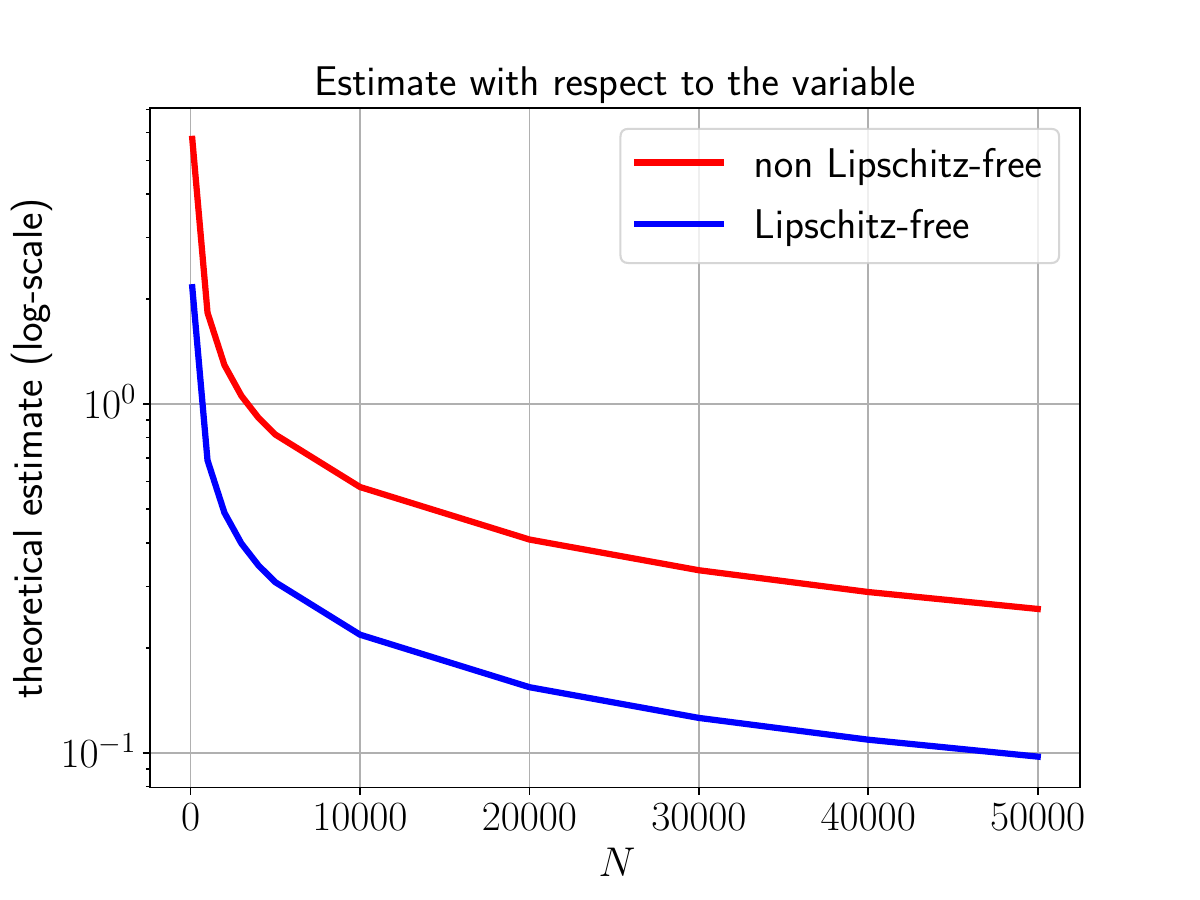}
    \endminipage
    \minipage{0.50\textwidth}
    \includegraphics[width=\linewidth]{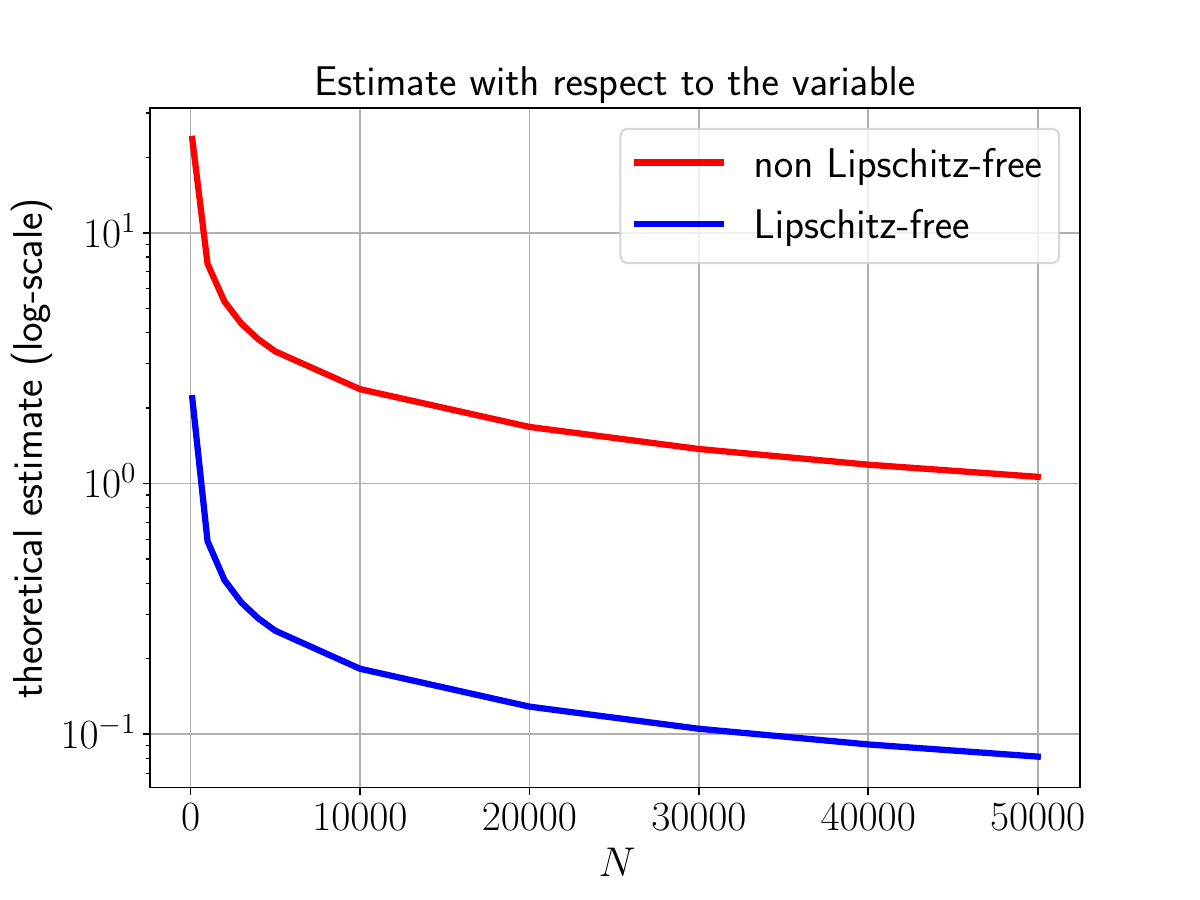}
    \endminipage
    \caption{Results of comparison of theoretical estimates with respect to the variable \eqref{estim_subgrad_var} and \eqref{estimate_variable_new}, for Example \ref{ex_2}, with $n=1000, R = 10$ (left). And with $n = 1000, R = 100$ (right).}
    \label{fig4}
\end{figure} 

Next, we illustrate the behavior of the estimates~\eqref{extime_relative_mirror_free}, \eqref{extime_absol_mirror_free}, \eqref{extime_var_relative}, and~\eqref{extime_var_absol} as functions of the number of iterations, for different values of the inexactness parameters $\alpha$ and $\Delta$, for Examples~\ref{ex_1} and~\ref{ex_2}.

\begin{figure}[htp]
    \minipage{0.50\textwidth}
    \includegraphics[width=\linewidth]{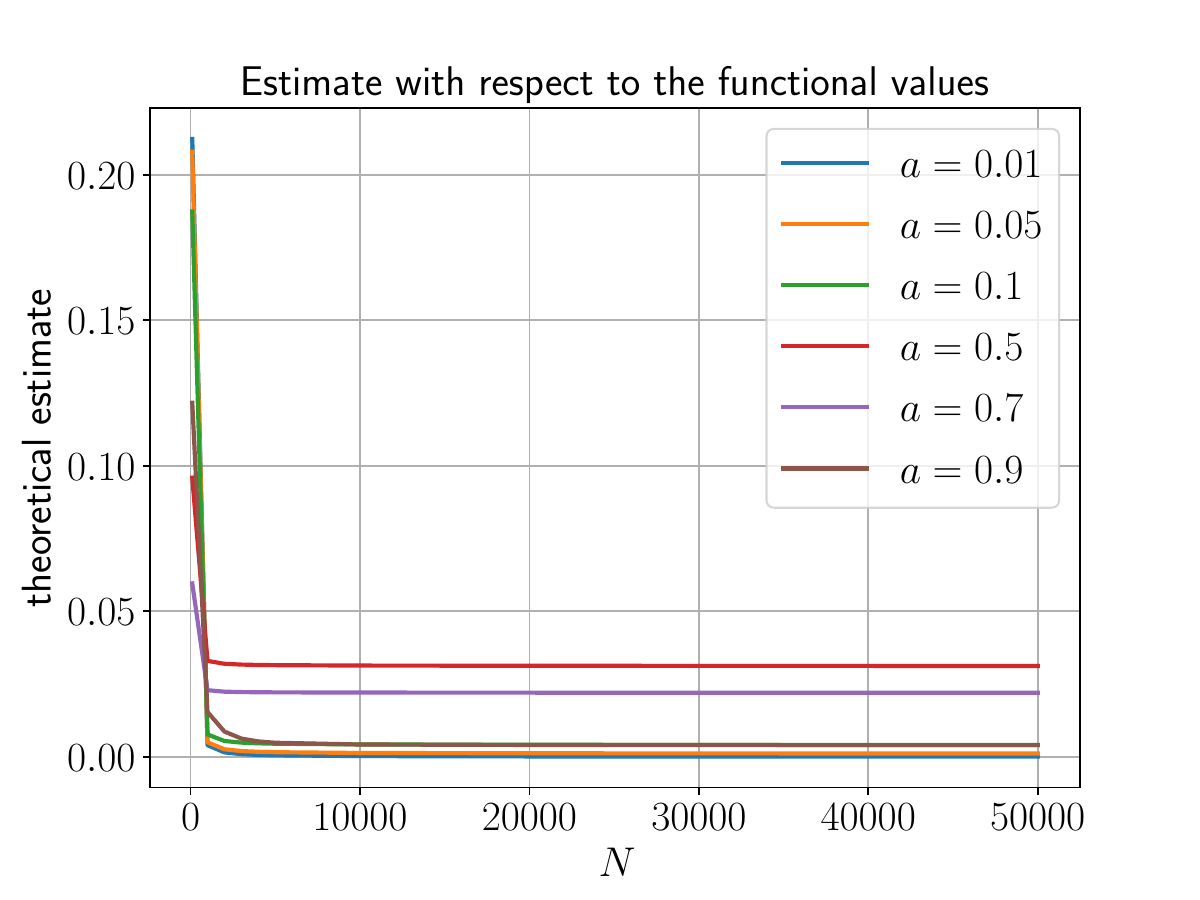}
    \endminipage
    \minipage{0.50\textwidth}
    \includegraphics[width=\linewidth]{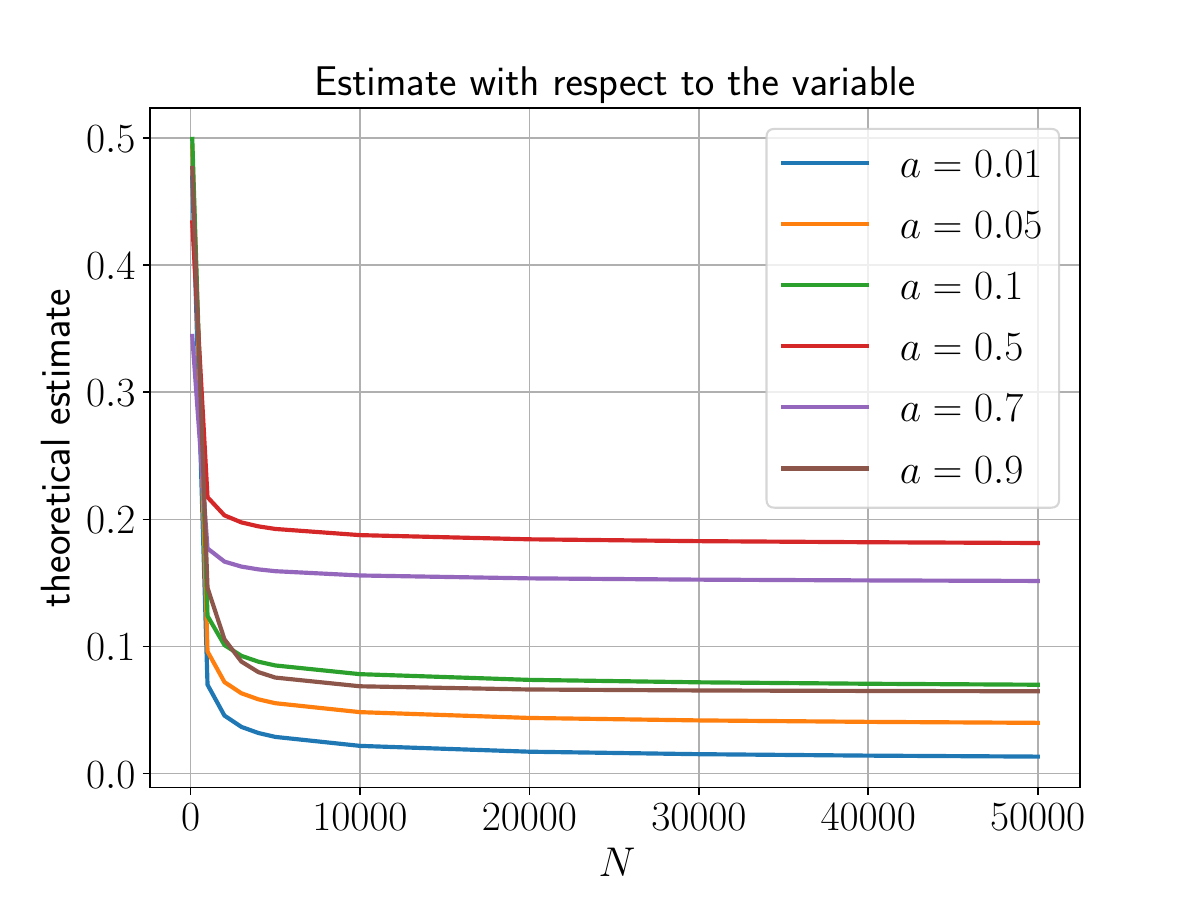}
    \endminipage
    \caption{Theoretical estimates with respect to the functional values \eqref{extime_relative_mirror_free} and variable \eqref{extime_var_relative}, for Example \ref{ex_1}, with $n=1000,$ and $ R = 10$.}
    \label{fig5}
\end{figure} 

\begin{figure}[htp]
    \minipage{0.50\textwidth}
    \includegraphics[width=\linewidth]{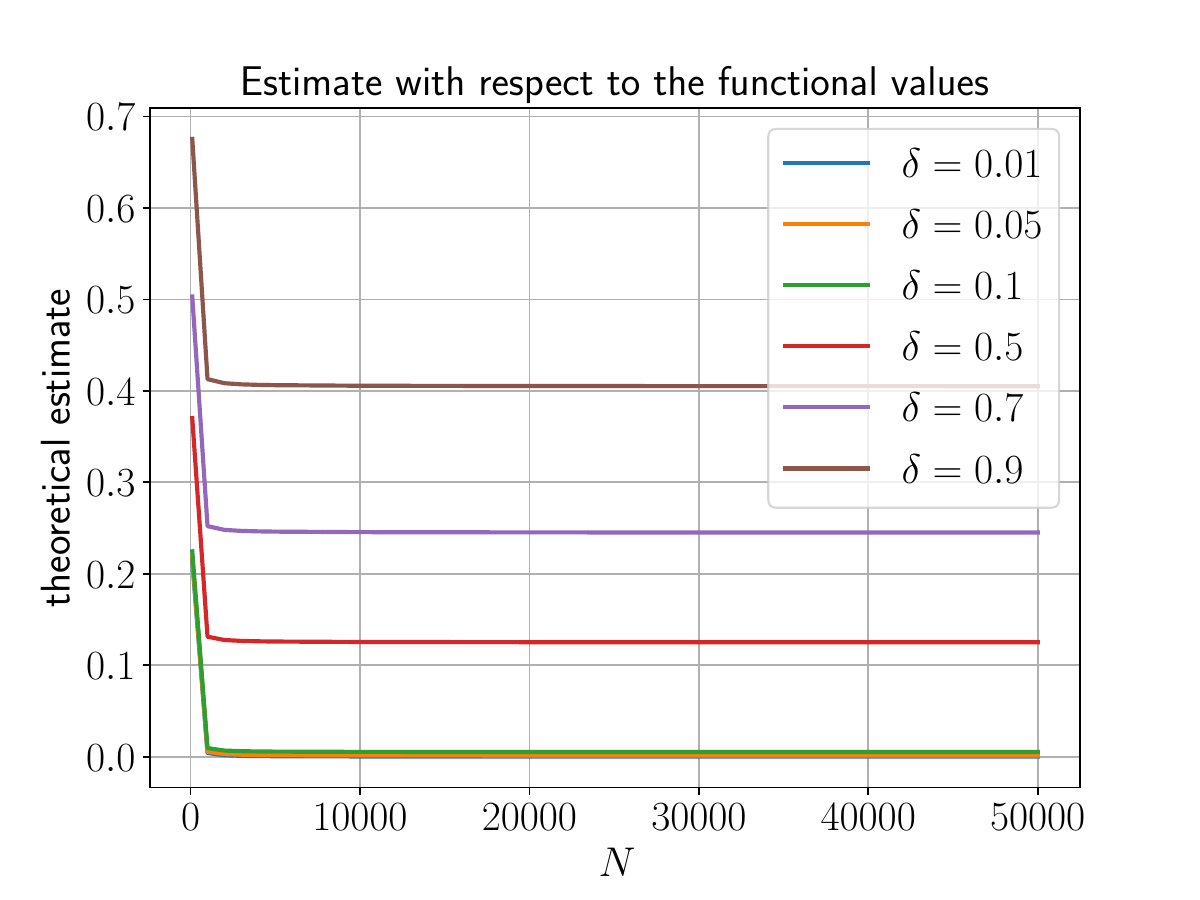}
    \endminipage
    \minipage{0.50\textwidth}
    \includegraphics[width=\linewidth]{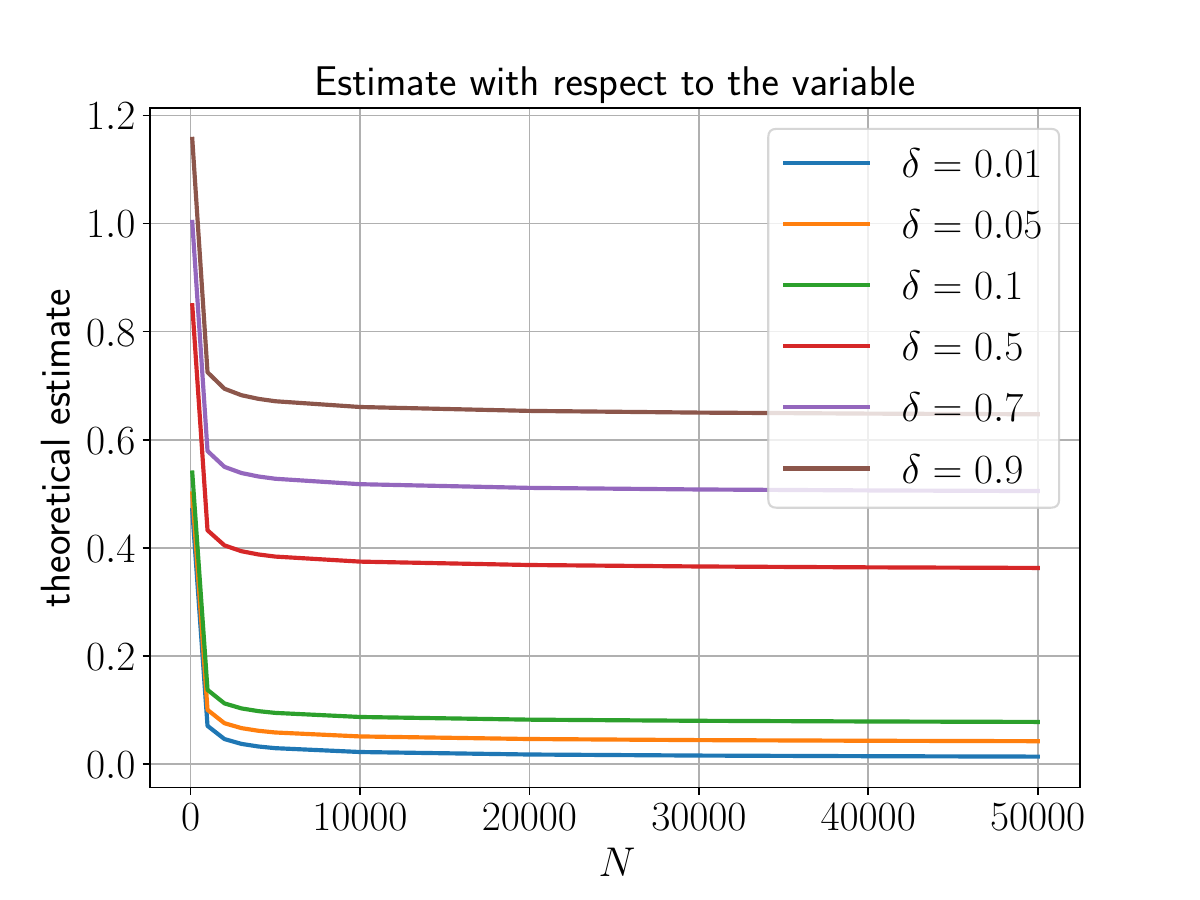}
    \endminipage
    \caption{Theoretical estimates with respect to the functional values \eqref{extime_absol_mirror_free} and variable \eqref{extime_var_absol}, for Example \ref{ex_1}, with $n=1000,$ and $ R = 10$.}
    \label{fig6}
\end{figure} 


\section{Conclusions}

In this paper, we analyzed the mirror descent algorithm for non-smooth optimization problems with relatively strongly convex objective functions without assuming Lipschitz continuity. A Lipschitz-free convergence analysis was developed for exact and inexact subgradient information, including relative and absolute inexactness models. The classical optimal convergence rate $\mathcal{O}(1/\varepsilon)$, in the case of exact subgradient information, was recovered as a special case of the performed  analysis.

To illustrate the practical relevance of the achieved results, we conducted numerical experiments in the Euclidean setting. The experiments demonstrated that the derived theoretical bounds are sharper than the classical estimates available in the literature and remain robust under inexact gradient information. In particular, the results highlight the advantages of the proposed Lipschitz-free mirror descent analysis in scenarios where Lipschitz constants are unknown, difficult to estimate, or excessively large.



\begin{thebibliography}{99}
\bibitem{alkousa2024optimal}
Alkousa, M., Stonyakin, F., Abdo, A., Alcheikh, M. (2024). Optimal Convergence Rate for Mirror Descent Methods with Special Time-Varying Step Sizes Rules. In: Eremeev, A., Khachay, M., Kochetov, Y., Mazalov, V., Pardalos, P. (eds) Mathematical Optimization Theory and Operations Research: Recent Trends. MOTOR 2024. Communications in Computer and Information Science, vol 2239. Springer, Cham.

\bibitem{Ablaev2022Subgradient}
Ablaev, S. S., Makarenko, D. V., Stonyakin, F. S., Alkousa, M. S., \& Baran, I. V. (2022). Subgradient methods for non-smooth optimization problems with some relaxation of sharp minimum. arXiv preprint arXiv:2212.06055. 

\bibitem{baraldi2023proximal} 
Baraldi, R. J., Kouri, D. P.: A proximal trust-region method for nonsmooth optimization with inexact function and gradient evaluations. Mathematical Programming, 201(1), 559-598 (2023)

\bibitem{Bayandina2018Mirror}
Bayandina, A., Dvurechensky, P., Gasnikov, A., Stonyakin, F., \& Titov, A. (2018). Mirror descent and convex optimization problems with non-smooth inequality constraints. In Large-scale and distributed optimization (pp. 181-213). Cham: Springer International Publishing.

\bibitem{Beck2003Mirror}
A. Beck, M. Teboulle: Mirror descent and nonlinear projected subgradient methods for convex optimization. Oper. Res. Lett., 31(3), pp. 167--175, 2003.

\bibitem{article:beck_comirror_2010}
A.~Beck, A.~Ben-Tal, N.~Guttmann-Beck, L.~Tetruashvili: The comirror algorithm for solving nonsmooth constrained convex problems. Operations Research Letters, \textbf{38}(6), pp. 493--498, 2010.

\bibitem{applications_tomography_2001}
A.~Ben-Tal, T.~Margalit, A.~Nemirovski: The Ordered Subsets Mirror Descent Optimization Method with Applications to Tomography. SIAM Journal on Optimization, \textbf{12}(1), pp. 79--108, 2001.	

\bibitem{Boyd2004Subgradient}
S. Boyd, L. Xiao, A. Mutapcic: Subgradient methods. lecture notes of EE392o, Stanford University, Autumn Quarter, 2004(01).

\bibitem{Bubeck_book}
S.~Bubeck: Convex optimization: algorithms and complexity. Foundations and Trends in Machine Learning, \textbf{8}(3--4), pp. 231--357, 2015. \url{https://arxiv.org/pdf/1405.4980.pd}

\bibitem{Chen1993}
G. Chen, M. Teboulle: Convergence analysis of a proximal like minimization algorithm using Bregman functions, SIAM J. Optim. 3 (1993) 538--543.	

\bibitem{article:doan_2019}
T.~T.~Doan, S.~Bose, D.~H.~Nguyen, C.~L.~Beck: Convergence of the Iterates in Mirror Descent Methods. IEEE Control Systems Letters,  \textbf{3}(1), pp. 114--119, 2019.

\bibitem{devolder2013intermediate} 
Devolder, O., Glineur, F., Nesterov, Y.: Intermediate gradient methods for smooth convex problems with inexact oracle (No. UCL-Université Catholique de Louvain). Technical report, CORE-2013017 (2013)

\bibitem{kornilov2023intermediate}
Nikita Kornilov, Mohammad Alkousa, Eduard Gorbunov, Fedor Stonyakin, Pavel Dvurechensky, and Alexander Gasnikov: Intermediate gradient methods with relative inexactness. Journal of Optimization Theory and Applications 207, no. 3 (2025): 62.

\bibitem{Bach2012simpler}
Lacoste-Julien, Simon, Mark Schmidt, and Francis Bach: A simpler approach to obtaining an $O (1/t)$ convergence rate for the projected stochastic subgradient method. arXiv preprint arXiv:1212.2002 (2012).

\bibitem{matyukhin2021convex} 
Matyukhin, V., Kabanikhin, S., Shishlenin, M., Novikov, N., Vasin, A., Gasnikov, A.:  Convex optimization with inexact gradients in Hilbert space and applications to elliptic inverse problems. In International Conference on Mathematical Optimization Theory and Operations Research, Cham: Springer International Publishing, 159-175 (2021)

\bibitem{Nemirovskii1979efficient}
A. Nemirovskii: Efficient methods for large-scale convex optimization problems. Ekonomika i Matematicheskie Metody, 1979. (in Russian)
		
\bibitem{Nemirovsky1983Complexity}
A. Nemirovsky, D. Yudin: Problem Complexity and Method Efficiency in Optimization. J. Wiley \& Sons, New York 1983.	
	
\bibitem{Nesterov_book}
Yu.~Nesterov: Lectures on convex optimization. Switzerland: Springer Optimization and Its Applications, 2018.

\bibitem{article:Nazin_2011}
A.~V.~Nazin, B.~M.~Miller: Mirror Descent Algorithm for Homogeneous Finite Controlled Markov Chains with Unknown Mean Losses. Proceedings of the 18th World Congress The International Federation of Automatic Control Milano (Italy) August 28 - September 2, 2011.
	
\bibitem{article:Nazin_2014}
A.~Nazin, S.~Anulova, A.~Tremba: Application of the Mirror Descent Method to Minimize Average Losses Coming by a Poisson Flow. European Control Conference (ECC) June 24--27, 2014.

\bibitem{polyak1987introduction} 
Polyak, B. T.:  Introduction to Optimization. Optimization Software,  New York (1987)

\bibitem{shor_book}
N. Shor: On the structure of algorithms for the numerical solution of optimal planning and design problems (in Russian) Ph. D. dissertation, Cybernetics Institute, Academy of Sciences of the Ukrainian SSR, Kiev (1964).

\bibitem{stonyakin2021inexact} 
Stonyakin, F., Tyurin, A., Gasnikov, A., Dvurechensky, P., Agafonov, A., Dvinskikh, D., Alkousa, M., Pasechnyuk, D., Artamonov, S.,  Piskunova, V.: Inexact model: A framework for optimization and variational inequalities. Optimization Methods and Software, 36(6), 1155-1201 (2021)

\bibitem{Stonyakin2019some}
Stonyakin, F. S., Alkousa, M. S., Titov, A. A., \& Piskunova, V. V. (2019, June). On some methods for strongly convex optimization problems with one functional constraint. In International Conference on Mathematical Optimization Theory and Operations Research (pp. 82-96). Cham: Springer International Publishing.

\bibitem{Stonyakin2019Adaptive}
Stonyakin, F. S., Alkousa, M., Stepanov, A. N., \& Titov, A. A. (2019). Adaptive mirror descent algorithms for convex and strongly convex optimization problems with functional constraints. Journal of Applied and Industrial Mathematics, 13(3), 557-574.

\bibitem{stonyakin2019}
F. S. Stonyakin, M. Alkousa, A. N. Stepanov, A. A. Titov: Adaptive Mirror Descent Algorithms for Convex and Strongly Convex Optimization Problems with Functional Constraints.  J. Appl. Ind. Math. 13, 557--574 (2019).

\bibitem{vasin2023accelerated}
Vasin, A., Gasnikov, A., Dvurechensky, P., Spokoiny, V.: Accelerated gradient methods with absolute and relative noise in the gradient. Optimization Methods and Software, 1-50 (2023)

\bibitem{Xia2025Lipschitz}
Xia, Yong, Yan-Hao Zhang, and Zhi-Han Zhu: Lipschitz-Free Projected Subgradient Method with Time-Varying Step-Size: Y. Xia et al. Journal of the Operations Research Society of China (2025): 1-9.

\bibitem{Bowen2025Lipschitz}
Yuan, Bowen, and Mohammad S. Alkousa: Lipschitz-Free Mirror Descent Methods for Non-Smooth Optimization Problems. arXiv preprint arXiv:2506.01681 (2025).

\bibitem{Lai2026Capacity}
Yu-Hong Lai, Hao-Chung Cheng: A Mirror-Descent Algorithm for Computing the Petz-Rényi Capacity of Classical-Quantum Channels. arXiv preprint arXiv:2601.10558 (2026).

\end{thebibliography}
\end{document}